\theoremstyle{plain}
\newtheorem{theorem}{Theorem}[]
\newtheorem{lemma}[theorem]{Lemma}
\newtheorem{corollary}[theorem]{Corollary}
\theoremstyle{definition}
\def\c #1{{\cal #1}}
\def\ms#1{\mathscr{#1}}             
\def\b#1{\mathbf{#1}}
\def\BBQ{\mathbb{Q}}
\def\Q{\mathbb{Q}}
\def\R{\mathbb{R}}
\def\Z{\mathbb{Z}}
\def\sc{{\mathsf c}}      %
\def\Var{\mathsf{Var}}
\def\La{\c L_{\rm a}}
\def\Lp{\c L^{+}}
\def\M{\mathcal{M}}
\def\SS{\mathcal{S}}
\def\cL{\c L}
\def\Prop{\mathit{Prop}}
\def\el{\mathrm{L}}
\def\bcap{\mathop{\textstyle\bigcap}}
\def\bcup{\mathop{\textstyle\bigcup}}
\def\bscap{\mathop{\textstyle\bigsqcap}}
\def\bscup{\mathop{\textstyle\bigsqcup}}
\def\ab #1{|#1|}
\def\abm#1{\ab{#1}^\M}
\def\abr#1{|#1|_\R}
\def\abs#1{\ab{#1}^{\M^*}}
\def\bicon{\leftrightarrow}
\def\dsh{\mathord{-}}           
\def\emp{\emptyset}
\def\id{\approx}
\def\ov#1{\overline{#1}}
\def\pm{\prec^\M}
\def\sub{\subseteq}
\def\<{\langle}
\def\>{\rangle}
\def\rp{/\!/}      
\def\D{\Delta}
\def\om{\omega}
\def\ph{\varphi}
\def\th{\theta}
\begin{document}    

\title{\Large Strong completeness of a first-order temporal logic for real time}
\author{Robert Goldblatt\thanks{School of Mathematics and Statistics,
Victoria University of Wellington, New Zealand. 
sms.vuw.ac.nz/$\sim$rob}}
\maketitle
\noindent

\begin{abstract}
Propositional temporal logic over the real number time flow is finitely axiomatisable, but its first-order counterpart is not recursively axiomatisable.
We study the logic that combines the propositional axiomatisation with the usual axioms for first-order logic with identity, and develop an alternative ``admissible'' semantics for it,  showing that it is strongly complete for admissible models over the reals. By contrast there is no recursive axiomatisation of the first-order temporal logic of admissible models whose time flow is the integers, or any scattered linear ordering.
\end{abstract}

\paragraph{Keywords.}
temporal logic, recursively axiomatisable, admissible model  

\noindent{\bf MSC2020 classification}:   03B44

\section{Introduction}

A set of formulas is called  \emph{recursively axiomatisable} if it is the set of theorems of some deductive system that has a recursive set of axioms and a recursive set of proofs \cite[\S 7]{ ende:elem77}.
For instance,
if the flow of time is modelled by the linearly ordered set $(\R,<)$ of real numbers, then the resulting temporal logic of valid \emph{propositional} formulas is recursively axiomatisable. This was  shown by Robert Bull \cite{bull:alge68}, using finitely many axioms and inference rules. 

The situation of\emph{ first-order} temporal logic is quite different.
Dana Scott proved in the 1960's that the logic of first-order temporal models over the reals has in general no recursive axiomatisation. The proof applies when the time flow is any infinite Dedekind complete linear order, including the integers $\Z$ and the natural numbers $\om$.

It is natural to consider the recursively axiomatised logic $\el_\R$ obtained by combining Bull's propositional axioms  and rules with those of classical first-order logic with identity.  But, by Scott's result, this
$\el_\R$ is not complete for validity in  first-order temporal models over the reals, and the incompleteness cannot be overcome by adding any recursive set of further axioms to $\el_\R$. What model-theoretic interpretation can 
$\el_\R$ have, if any?  Is there some  notion of validity over real time, different to the standard one, with respect to which $\el_\R$ is complete?

We give a positive answer to this question by adapting the `admissible'' semantics for quantified modal logic developed in \cite{gold:gene06,gold:quan11}. This is based on the idea that, while a proposition can be identified with the set of worlds, or moments of time, at which it is true, not all sets of worlds/times need correspond in this way to any proposition. We use models $\M$ whose components include a time flow $T$, a universe $U$ of individuals, and a designated collection $\Prop$ of subsets of $T$, called the \emph{admissible propositions} of $\M$, from which the interpretations of formulas are to be selected. In particular $\M$ assigns to each sentence $\ph$ a truth set $\abm{\ph}$ of all points in $T$ at which $\ph$ is true, and these truth sets are required to be admissible.
 $\Prop$ is closed under the Boolean set operations that interpret the truth-functional connectives, and under further operations interpreting the temporal modalities. The partial ordering $\sub$ of set inclusion serves as an entailment relation between propositions. $\Prop$ is also used to give an alternative interpretation of the quantifiers which takes into account the admissibility of propositions but still  validates the classical axioms and rules for $\forall$ and $\exists$.

In a standard model, a sentence $\forall x\ph$ is treated as semantically equivalent to the conjunction of the sentences $\ph(a/x)$ for all $a\in U$. This makes the truth sets $\abm{\forall x\ph}$, $\abm{\ph(a/x)}$ satisfy
$$
 \abm{\forall x\ph}= \bcap_{}\{\abm{\ph(a/x)}:a\in U\}.
 $$ 
An admissible model has instead that
$$
 \abm{\forall x\ph}= \bscap_{}\{\abm{\ph(a/x)}:a\in U\},
 $$
 where $\bscap$ is an operation that produces the  \emph{greatest lower bound} of the $\abm{\ph(a/x)}$'s in the partially ordered set $(\Prop,\sub)$. This means that $\abm{\forall x\ph}$ is an admissible proposition that entails all of the                      $\abm{\ph(a/x)}$'s, and is the weakest such proposition to do so, i.e.\ it is entailed by any other admissible proposition that entails all of the  $\abm{\ph(a/x)}$'s. Our definition of ``model'' ensures that this greatest lower bound  exists and is admissible whenever all of the  $\abm{\ph(a/x)}$'s are admissible. The existential quantifier is treated dually as 
 $$
 \abm{\exists x\ph}= \bscup_{}\{\abm{\ph(a/x)}:a\in U\},
 $$
 where the operation $\bscup$ produces least upper bounds in $\Prop$. The interpretation of $\forall$ and $\exists$ by greatest-lower and least-upper bounds has a long history that is briefly discussed in \cite[\S1.4]{gold:quan11}.
 
 The next two sections set out the background theory of admissible semantics for first-order temporal logic. Then in Section 4 we prove that for a countable language, $\el_\R$ is strongly complete over the set of admissible models based on $\R$. This is done by taking an $\el_\R$-consistent set $\D$ of formulas and applying known results to obtain a standard model that satisfies $\Delta$ and is based on the rational number time flow $(\Q,<)$. This model is then extending to an admissible model based on $\R$ that still satisfies $\D$. 
 
 The final section shows that this construction cannot be successfully carried out when the temporal order  is discrete. We prove that
the set of formulas valid in all admissible models over the \emph{integer} time flow $\Z$ is not recursively axiomatisable, just as for the logic of standard models over $\Z$. The non-axiomatisability argument extends to hold for any logic characterised by the admissible models over a time flow that is \emph{scattered}, i.e.\ does not contain a copy of $\Q$ as a suborder.

\section{Logics}

We review the syntax of first-order temporal logic.
Fix a denumerable set $\Var=\{x,y,z,\dots\}$ of individual variables and a  signature $\cL$, consisting of various individual constants $\sc$, function symbols $F$ and predicate symbols $P$.
An $\cL$-\emph{term} is any individual variable, any constant $\sc$ from $\cL$, or inductively any expression $F\tau_1\cdots\tau_n$ where $F$ is an $n$-ary function symbol from $\cL$, and $ \tau_1,\dots,\tau_n$ are $\c L$-terms. 
 A term is \emph{closed} if it contains no variables.
 
An \emph{atomic $\cL$-formula} is any expression $P\tau_1\cdots\tau_n$ or $\tau_1\id\tau_2$ where $P$ is an $n$-ary predicate symbol from $\cL$, and the $ \tau_i$ are $\c L$-terms. 
The set of $\cL$-\emph{formulas} is generated from the atomic ones in the usual way, using the Boolean connectives $\land$ (conjunction) and  $\neg$ (negation), the universal quantifiers $\forall x$ for each individual variable $x$, and the temporal modalities $\b G$ (`it will always be that') and  $\b H$ (`it has always been that').
Other Boolean connectives ($\lor$,  $\to$, $\bicon$),  and the existential quantifiers $\exists x$ are introduced by standard definitions. $\b F$ (`at some future time') and $\b P$ (`at some past time') are defined as $\neg\b G\neg$ and $\neg\b H\neg$
respectively. The modality $\Box$ is introduced by defining $\Box \ph$ to be the formula $\b H\ph\land\ph\land\b G\ph$. On a linear time flow $\Box $ is the universal modality expressing `at all times, past present and future'. $\lozenge\ph$ abbreviates $\b P\ph\lor\ph\lor\b F\ph$ (`at some time').
The notation $\ph(\tau/x)$ denotes the formula obtained by replacing all free occurrences of $x$ in formula $\ph$ by the term $\tau$. 
  Each formula or inference rule has a \emph{mirror image}, obtained by replacing $\b G$ by $\b H$ and vice versa, hence also interchanging $\b F$ and $\b P$.

We now list axioms and rules of inference that define the notion of a temporal logic that we will work with. The inference rules needed are

\medskip
\begin{tabular}{ll}
$\dfrac{\ph,\ \ph\to\psi}{\psi}$  &\qquad \emph{Modus Ponens}
\\
\noalign{\medskip}
$\dfrac{\ph}{\b G\ph}$ ,\quad  $\dfrac{\ph}{\b H\ph}$   &\qquad    \emph{Temporal Generalisation}
\\
\noalign{\medskip}
$\dfrac{\ph}{\forall x\ph}$      &\qquad \emph{Universal Generalisation}
\end{tabular}

\bigskip\noindent
We  adopt the following axioms about quantifiers.
 
\medskip
\begin{tabular}{ll}
$\forall x\ph\to \ph(\tau/x)$, \qquad where $\tau$ is free for $x$ in $\ph$.
&\quad \quad \emph{Universal Instantiation} 
\\
$\forall x(\ph\to \psi) \to (\forall x\ph\to\forall x\psi)$  
&\quad\quad \emph{Universal Distribution} 
\\
$\ph\to\forall x\ph$, \qquad where $x$ is not free in $\ph$.
&\quad\quad \emph{Vacuous Quantification} 
\end{tabular}

\bigskip\noindent
For the identity symbol $\id$  we need the following axioms, in which  the notation $\ph(\tau' \rp \tau)$ denotes any formula obtained from $\ph$ by replacing some, but not necessarily all, occurrences of $\tau$ by $\tau'$.

\medskip
\begin{tabular}{ll}
$\tau\id\tau$
&\quad\quad \emph{Self Identity} 
\\
$\tau\id\tau'\to(\ph\to\ph(\tau'\rp \tau))$, \quad with $\ph$ atomic.
&\quad \quad\emph{Substitution of Identicals} 
\\
$\tau\id\tau'\to \Box(\tau\id\tau')$
&\quad \quad\emph{Rigid Identity} 
\end{tabular}

\bigskip\noindent
By a \emph{(temporal) logic} over a signature $\c L$ we will mean any set $\el$ of $\c L$-formulas that is closed under the above rules and includes all instances of truth-functional tautologies and the above axioms, as well as the following two schemes and their mirror images.

\medskip
\begin{tabular}{ll}
$\b G(\ph\to\psi)\to(\b G\ph\to\b G\psi)$ &\quad 
\\
\noalign{\medskip}
$\ph\to\b G\b P\ph$    &\qquad\qquad   
\end{tabular}

\medskip\noindent
Members of $\el$ are called \emph{$\el$-theorems}. The last two schemes and their mirror images axiomatise the propositional logic K$_t$ due to E.~J.~Lemmon and known as the ``minimal tense-logic" \cite[p.176]{prio:past67}.

It is significant that the \emph{Barcan formulas} 
\begin{equation}\label{barcan}
\forall x\b G\ph\to\b G\forall x\ph, \qquad  \forall x\b H\ph\to\b H\forall x\ph
\end{equation}
are derivable as theorems of any logic as defined here. The derivation depends on Universal Instantiation as well as the scheme $\ph\to\b G\b P\ph$ and its mirror: see \cite[p.147]{prio:past67} or \cite[Theorem 11.5]{gabb:inve76}.

The logic $\el_\Q$ over $\c L$ is defined to be the smallest  logic that includes the following schemes and their mirror images (with the properties of the temporal order that they encapsulate listed on the right):

\medskip
\begin{tabular}{ll}
\noalign{\medskip}
$\b G\ph\to\b G\b G\ph$     &\qquad \emph{transitivity}
\\
$\b F\ph\land\b F\psi\to \b F(\ph\land\psi)\lor \b F(\ph\land\b F\psi)\lor\b F(\b F\ph\land\psi)$      &\qquad \emph{linear future}
\\
\noalign{\medskip}
$\b G\ph\to\b F\ph$      &\qquad \emph{endless future }
\\
\noalign{\medskip}
$\b G\b G\ph\to\b G\ph$    &\qquad \emph{density}
\end{tabular}

\medskip\noindent
Thus the $\el_\Q$-theorems over $\c L$ are all $\c L$-formulas that are obtainable from the above axioms by applying the given rules. 
The last six schemes together axiomatise the propositional temporal logic of $(\Q,<)$ \cite{bull:alge68}.

\medskip\noindent
The logic $\el_\R$ can be defined by adding to $\el_\Q$ the axiom
\begin{equation}\label{cty}
\Box(\b G\ph\to\b P\b G\ph)\to(\b G\ph\to\b H\ph) \qquad\qquad\qquad\text{\emph{Dedekind completeness}}
\end{equation}

For a given logic $\el$, a formula $\ph$ is \emph{$\el$-consistent} if $\neg\ph$ is not an $\el$-theorem, and a set $\D$ of formulas is $\el$-consistent if every finite subset of $\D$ has an $\el$-consistent conjunction. An $\el$-consistent set $\D$ of $\c L$-formulas is \emph{maximally $\el$-consistent} over $\c L$ if there is no $\el$-consistent set of $\c L$-formulas that properly extends it, or equivalently if it is \emph{negation complete} in the sense that for any $\c L$-formula $\ph$, if  $\ph\notin\D$ then $\neg\ph\in\D$.  A maximally $\el$-consistent set contains all  $\el$-theorems.

Write $\D\vdash_\el\ph$ to mean that there is a formula $\delta\to\ph$ in $\el$ such that $\delta$ is the conjunction of finitely many members of $\D$.
$\D$ is \emph{$\forall$-complete} over $\c L$ if for every $\c L$-formula $\ph$ and variable $x$, if $\D\vdash_\el\ph(\tau/x)$ for all \emph{closed} $\c L$-terms $\tau$, then  $\D\vdash_\el\forall x\ph$.  $\D$ is \emph{$\el$-saturated over $\c L$} if it is both
maximally $\el$-consistent and $\forall$-complete over $\c L$.
The following is a well-known result that goes back to \cite{henk:gene57}.

\begin{theorem}  \label{henkin}
For any logic $\el$ over a countable signature $\c L$,
if $\D$ is $\el$-consistent over $\c L$, and $\c L^+$ is a countable extension of $\c L$ that includes infinitely many constants not in $\c L$, then  $\D$ can be extended to an $\el$-saturated set over $\c L^+$.
\hfill\qed
\end{theorem}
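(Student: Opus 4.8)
The plan is to carry out a Lindenbaum--Henkin construction adapted to the tense-logical setting, building an increasing chain of consistent sets whose union is the desired saturated set. Since $\c L^+$ is countable, its formulas can be enumerated as $\ph_0,\ph_1,\ph_2,\dots$. I would define an increasing sequence $\D=\D_0\sub\D_1\sub\D_2\sub\cdots$ of $\el$-consistent sets of $\c L^+$-formulas, maintaining throughout the invariant that each $\D_n$ differs from $\D$ by only finitely many formulas and therefore contains only finitely many of the new constants from $\c L^+\setminus\c L$. Given $\D_n$, I first perform the Lindenbaum step: if $\D_n\cup\{\ph_n\}$ is $\el$-consistent put $\ph_n$ into the set, otherwise put $\neg\ph_n$ in. I then perform a Henkin witnessing step: whenever the formula just added has the form $\neg\forall x\psi$, I also adjoin the witness $\neg\psi(\sc/x)$, where $\sc$ is a constant from $\c L^+\setminus\c L$ not occurring in $\D_n$ nor in $\psi$. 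Because only finitely many new constants have been used so far while infinitely many are available, such a fresh $\sc$ always exists, and $\sc$ is a closed term, as required by the definition of $\forall$-completeness.

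The main obstacle is showing that the witnessing step preserves $\el$-consistency, i.e.\ that if $\D_n\cup\{\neg\forall x\psi\}$ is $\el$-consistent and $\sc$ is fresh, then so is $\D_n\cup\{\neg\forall x\psi,\,\neg\psi(\sc/x)\}$. I would argue by contraposition. If the larger set were inconsistent, there would be a conjunction $\delta$ of finitely many members of $\D_n\cup\{\neg\forall x\psi\}$ with $\delta\to\psi(\sc/x)\in\el$. Since $\sc$ is fresh it does not occur in $\delta$, and the key classical lemma---that $\el$-theorems are closed under replacement of a constant by a variable not already present, a ``theorem on constants'' provable from Universal Generalisation, Universal Distribution and Vacuous Quantification---lets me replace $\sc$ throughout by a fresh variable $y$ to obtain $\delta\to\psi(y/x)\in\el$. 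Generalising on $y$ (which is not free in $\delta$) and rewriting $\forall y\,\psi(y/x)$ as the $\alpha$-equivalent $\forall x\psi$ yields $\delta\to\forall x\psi\in\el$, so $\D_n\cup\{\neg\forall x\psi\}\vdash_\el\forall x\psi$, contradicting its consistency. I expect this to be the delicate part: one must check that the theorem on constants really does hold in the present system, and here I would emphasise that the tense rules and axiom schemes are schematic and do not involve $\sc$, so uniform substitution of $y$ for $\sc$ carries any derivation to a derivation and the purely first-order reasoning is untouched by the temporal apparatus.

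Finally I would take $\D^+=\bcup_n\D_n$ and verify the three required properties. Consistency of $\D^+$ follows because any finite subset lies in some $\D_n$. Maximal $\el$-consistency (negation completeness) holds because for every formula $\ph_n$ exactly one of $\ph_n,\neg\ph_n$ was placed in $\D_{n+1}$. For $\forall$-completeness, suppose $\forall x\psi\notin\D^+$; then by negation completeness $\neg\forall x\psi\in\D^+$, so this formula entered the chain at some stage, at which point its witness $\neg\psi(\sc/x)$ was also adjoined. Hence $\psi(\sc/x)\notin\D^+$, so it is not the case that $\D^+\vdash_\el\psi(\tau/x)$ for every closed $\c L^+$-term $\tau$; the contrapositive is exactly the $\forall$-completeness requirement. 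Thus $\D^+$ is $\el$-saturated over $\c L^+$ and extends $\D$, as claimed.
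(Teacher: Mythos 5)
The paper states Theorem \ref{henkin} without proof, citing it as a classical result going back to Henkin (1957); your Lindenbaum--Henkin construction is precisely the standard argument behind that citation, and it is correct. You have also correctly isolated the one delicate point---the ``theorem on constants'' needed to show that the witnessing step preserves $\el$-consistency---and your justification (the axiom schemes and rules are schematic and do not mention the fresh constant, so uniformly replacing it by a fresh variable carries a derivation to a derivation) is the right one.
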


.

\section{Admissible Models}

A \emph{time flow} is a structure $(T,<)$ for which  $T$ is a set (thought of as a set of times/instants), and $<$ is a linear ordering on $T$, i.e.\ a transitive irreflexive relation that is linear in the sense that either $s<t$ or $t<s$ for all distinct $s,t\in T$. The relation inverse to $<$ will be denoted $>$.
When $T$ is a set of numbers, e.g.\ the reals $\R$, rationals $\Q$, integers $\Z$ or natural numbers $\om$, then $<$ will invariably have its usual numerical meaning.

A \emph{model structure} is a system
$\SS=(T,<,\Prop,U)$
such that $(T,<)$ is a time flow;
$\Prop$ is a non-empty subset of the powerset $\wp T$ of $T$ that is closed under the Boolean set operations and under
 the operations $[<]$ and $[>]$, interpreting $\b G$ and $\b H$, defined by
\begin{align*}
[<]X&=\{t\in T: \forall s\in T(t<s\text{ implies } s\in X)\},
\\
[>]X&=\{t\in T: \forall s\in T(t>s\text{ implies } s\in X)\};   
\end{align*}
and $U$ is a set, called the \emph{universe} of $\SS$. A subset of $T$ is called \emph{admissible} if it belongs to $\Prop$.
Members of $\Prop$ may be referred to as the admissible propositions of $\SS$.

Operations $\bscap$ and $\bscup$  on collections of subsets of $T$ are defined by putting, for each $\c Z\sub\wp T$,
\begin{align*}
\bscap \c Z &= \bcup\{Y\in\Prop:Y\sub \bcap \c Z\},
\\
  \bscup \c Z &= \bcap\{Y\in\Prop: \bcup \c Z\sub Y\}.
\end{align*}
We emphasise that $\bscap\c Z$ and $\bscup\c Z$ are defined for arbitrary $\c Z\sub\wp W$. They need not be admissible, even when all members of $\c Z$ are admissible. If $\c Z\sub\Prop$ and $\bscap Z$ is admissible, then $\bscap\c Z$ is the greatest lower bound of $\c Z$ in the partially ordered set $(\Prop,\sub)$, and will not be equal to $\bcap\c Z$ unless the latter is admissible, which it need not be.  Dual statements hold about $ \bscup \c Z$ and $\bcup \c Z$ and least upper bounds \cite[\S 1.5]{gold:quan11}.

Another perspective is that $\Prop$ is a base for a topology on $T$ for which  $ \bscap \c Z$ is the interior of $ \bcap \c Z$, while  $\bscup \c Z$ is the closure of $\bcup \c Z$.

A \emph{premodel} $\M=(\SS,\abm{\dsh})$ for $\c L$ based on 
$\SS$ is given by an interpretation function   
$\ab{\dsh}^\M$ that assigns
to each individual constant $\sf c\in\cL$ an element $\ab{{\sf c}}^\M$ of the universe $ U$; 
to each $n$-ary function symbol $F\in\cL$ an $n$-ary function 
$\ab{F}^\M:U^n\to U$;
and
to each $n$-ary predicate symbol $P\in\cL$ a function $\ab{P}^\M:U^n\to\wp W$.
Informally $\ab{P}^\M(a_1,\dots,a_n)$  represents  the proposition that the predicate $P$ holds of the $n$-tuple $(a_1,\dots,a_n)$. Equivalent ways of interpreting $P$ are to assign to it the function $t\mapsto\{(a_1,\dots,a_n):t\in \ab{P}^\M(a_1,\dots,a_n)\}$ from $T$ to $U^n$, or  the subset $\{(t,a_1,\dots,a_n):t\in \ab{P}^\M(a_1,\dots,a_n)$\} of $T\times U^n$.

A \emph{variable assignment} is a function $f$ assigning to each $x\in\Var$ an element $fx$ of the universe $U$. Then $f$ inductively assigns
 to each $\cL$-term $\tau$ a value $\abm{\tau}f\in U$, by  putting $\abm{x}f=fx$; $\abm{\sc}f=\abm{\sc}$;
and
$\abm{F\tau_1\cdots\tau_n}f = \ab{F}^\M(\abm{\tau_1} f,\dots,\abm{\tau_n} f$). If $\tau$ is a \emph{closed} term, then $\abm{\tau}f=\abm{\tau}g$ for all variable assignments $f,g$, and we denote this constant value by $\abm{\tau}$.
We use the notation $f[a/x]$ for the variable assignment that \emph{updates}  $f$ by assigning the value $a$ to $x$ and otherwise acting identically to  $f$.

A premodel $\M$ associates with each $\c L$-formula $\ph$ and  assignment $f$ a subset $|\varphi|^\M  f$ of $T$, viewed as the  ``truth set'' of all times at which $\varphi$ is true under $f$. This is defined by induction on the formation of $\ph$:

\begin{itemize}
\item
$\abm{\tau_1\id\tau_2}f$ is $T$ if $\abm{\tau_1}f=\abm{\tau_2}f$, and is $\emp$ otherwise.
\item 
$\ab{P\tau_1\cdots\tau_n}^\M f=\ab{P}^\M(\abm{\tau_1} f,\dots,\abm{\tau_n} f)$.
\item
$|\varphi\wedge\psi|^\M f=|\varphi|^\M f\cap|\psi|^\M f$.
\item 
$|\neg\varphi|^\M f=T-|\varphi|^\M f$.
\item 
$|\b G\varphi|^\M f=[<]|\varphi|^\M f$.
\item 
$|\b H\varphi|^\M f=[>]|\varphi|^\M f$.
\item
$|\forall x\varphi|^\M  f = \bscap_{a\in U} |\varphi|^\M  f[a/x] $.
\end{itemize}
Since $\exists x\ph$ is $\neg\forall x\neg\ph$, it follows that
\begin{itemize}
\item
$| \exists x\varphi|^\M  f = \bscup_{a\in U}|\varphi|^\M  f[a/x]$.
\end{itemize}
Writing $\M,t,f\models\ph$ to mean that $t\in |\varphi|^\M  f$, we have the following clauses for this truth/satisfaction relation:
\begin{itemize}
\item
$\M,t,f\models\tau_1\id\tau_2$  \enspace iff \enspace  $\abm{\tau_1}f=\abm{\tau_2}f$.

\item $\M,t,f\models P\tau_{1}\cdots\tau_{n}$ \enspace iff \enspace $t\in \ab{P}^\M(\abm{\tau_1} f,\dots,\abm{\tau_n} f)$.

\item
$\M,t,f\models\ph\land\psi$ \enspace iff \enspace$\M,t,f\models\ph$ and $\M,t,f\models\psi$.

\item
$\M,t,f\models\neg\ph$ \enspace iff \enspace$\M,t,f\not\models\ph$.

\item $\M,t,f\models\b G \ph$ \enspace iff \enspace for all $s\in T,\ t<s\ \text{implies}\  \M,s,f\models
\ph$.

\item $\M,t,f\models\b H \ph$ \enspace iff \enspace for all $s\in T,\ s<t\ \text{implies}\  \M,s,f\models
\ph$.

\item $\M,t,f\models\b F \ph$ \enspace iff \enspace for some $s\in T,\ t<s\ \text{and}\  \M,s,f\models
\ph$.

\item $\M,t,f\models\b P \ph$ \enspace iff \enspace for some $s\in T,\ s<t\ \text{and}\  \M,s,f\models
\ph$.

\item $\M,t,f\models\Box \ph$ \enspace iff \enspace for all $s\in T,\   \M,s,f\models \ph$.

\item $\M,t,f\models\lozenge \ph$ \enspace iff \enspace for some $s\in T,\   \M,s,f\models \ph$.

\item $\M,t,f\models\forall x\ph$ \enspace iff \enspace there is an $X\in\Prop$ such that $t\in X \sub
\bcap_{a\in U}|\ph|^\M{f[a/x]}$.\label{pagesemantsforall}
\end{itemize}
From the clause for $\forall$  we see that
\begin{equation}\label{allkrip}
\text{
$\M,t,f\models\forall x\ph$ \enspace \emph{only if} \enspace for all $a\in U$, \ $\M,t,f[a/x]\models\ph$.}
\end{equation}
The converse need not hold.

Reading off the semantics for the defined existential quantifier gives
\begin{itemize}
\item[]
$\M,t,f\models\exists x\ph$  iff \enspace for all $X\in\Prop$ such that $t\in X$, there exists 
$s\in X$ and $a\in U$

\phantom{$\M,w,f\models \exists x\ph$\enspace iff}
 with $\M,s,f[a/x]\models\ph$.
\end{itemize}
Consequently:
\begin{equation}\label{somekrip}
\text{
If some $a\in U$ has  $\M,t,f[a/x]\models\ph$, then $\M,t,f\models\exists x\ph$.}
\end{equation}
Again, the converse can fail.

The semantics of term substitution is given by the  result \cite[1.6.2]{gold:quan11} that
\begin{equation} \label{termsub}
\text{$\M,t,f\models\ph(\tau/x)$ \enspace iff \enspace  $\M,t,f[\,\abm{\tau} f/x]\models\ph$.}
\end{equation}
It can also be shown  \cite[1.6.1]{gold:quan11} that if two assignments $f,g$ agree on all free variables of $\ph$, then
$\abm{\ph}f=\abm{\ph}g$. Hence if $\ph$ is a \emph{sentence} (no free variables), then $\abm{\ph}f=\abm{\ph}g$ for all assignments $f,g$  and  we write this single truth set as $\abm{\ph}$.
Also if $\ph$ is a sentence and
$\M,t,f\models\ph$, then $\M,t,g\models\ph$
for all assignments $g$, which we just write as $\M,t\models\ph$. Then $\abm{\ph}=\{t\in T:\M,t\models\ph\}$.

A formula $\ph$ is \emph{valid in} premodel $\M$, written $\M\models\ph$, if $\ab{\ph}^\M f=T$ for all $f$, i.e.\  if $\M,t,f\models\ph$  for all $t\in T$ and all variable assignments $f$ in $\M$.
$\ph$ is \emph{admissible in} premodel $\M$ if $\abm{\ph}f\in\Prop$ for all variable assignments $f$ in $\M$. A \emph{model} for $\c L$ is, by definition, an $\c L$-premodel in which every $\c L$-formula is admissible. We sometimes call this an \emph{admissible model} for emphasis or contrast.
 
It follows from \cite[\S 1.7]{gold:quan11} that the Universal Instantiation and Universal Distribution axioms are valid in any premodel $\M$ (Instantiation depends on \eqref{termsub}), while the Universal Generalisation rule is  sound for validity in $\M$. Moreover an instance $\ph\to\forall x\ph$ of Vacuous Quantification is valid in $\M$ provided that $\ph$ is admissible in $\M$. Thus if $\M$ is  a {model},  it validates Vacuous Quantification.

Validity of the Identity axioms in all premodels follows readily from the fact that the interpretation of $\id$ is  \emph{rigid}, i.e.\ independent of time. Under a given  variable assignment $f$, an identity
 $\tau_1\id\tau_2$ is true at some time iff it is true at all times, as its truth is determined  by the identity of the time-independent values $\abm{\tau_i}f$.

A logic  $\el$ is \emph{sound} for validity in a class  $\ms C$ of premodels if every $\el$-theorem is valid in all members of $\ms C$, or equivalently every formula that is satisfiable at a point in some member of $\ms C$ is $\el$-consistent. Conversely $\el$ is \emph{complete} over $\ms C$ if every $\el$-consistent formula is satisfiable in a member of $\ms C$, and is \emph{strongly complete} if every   $\el$-consistent set of formulas is satisfiable in a member of $\ms C$.
  
 Putting the above  observations about validity  together we see that the logic $\el_\Q$ is sound for validity in all \emph{models} based on the rational time flow. Likewise $\el_\R$ is validated by all models over the real time flow.

A premodel $\M$ will be called \emph{Kripkean} if  it always has
\begin{equation}\label{defkripkean}
|\forall x\varphi|^\M  f = \bigcap_{a\in U} |\varphi|^\M  f[a/x] .
\end{equation}
This means that $\forall$ gets the classical semantics
\begin{equation}\label{kripsem}
\text{
$\M,t,f\models\forall x\ph$\enspace iff \enspace for all $a\in U$, \ $\M,t,f[a/x]\models\ph$},
\end{equation}
and correspondingly, the existential quantifier gets
\begin{equation}\label{kripsemExists}
\text{
$\M,t,f\models\exists x\ph$\enspace iff \enspace for some $a\in U$, \ $\M,t,f[a/x]\models\ph$}
\end{equation}
(compare with \eqref{allkrip} and \eqref{somekrip}).

If a  premodel $\M$ has every subset admissible, i.e.\ $\Prop= \wp T$, then $\M$ is a model and $\bscap$ is just $\bcap$, so $\M$ is Kripkean. Such an $\M$ will be called a \emph{standard model}, and can be identified with the structure $(T,<,U,\abm{\dsh})$.  In \cite[\S\S 4.3--4.5]{gabb:temp94} a strong completeness theorem is proven for the logic  $\el_\Q$, showing that any $\el_\Q$-consistent set is satisfiable in a  standard model over the rational time flow. The method involves extending a consistent set to a saturated one over an enlarged signature, and then constructing a satisfying model for the saturated set. We distill from that work the following result.

\begin{theorem} \label{LQcomplete}
Let $\D$ be an $\el_\Q$-saturated set of $\c L$-formulas, where  $\c L$ is a countable signature having constant terms.  Then there is a standard $\c L$-model $\M$ over $(\Q,<)$ in which $\D$ is satisfied, i.e.\ $\M, q_0,f_0\models\D$ for some rational $q_0$ and some variable assignment $f_0$. Moreover each element of the universe of $\M$ is the value of some constant $\c L$-term.   \strut\hfill$\qed$
\end{theorem}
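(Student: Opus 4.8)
The plan is a Henkin-style canonical construction that places saturated sets at rational points, so that the accessibility relation of the resulting model is literally $<$ on $\Q$. I would first fix the universe and the rigid vocabulary. Let $U$ be the set of equivalence classes $[\tau]$ of closed $\cL$-terms under the relation $\tau\approx\tau'$ given by $\D\vdash_{\el_\Q}\tau\id\tau'$. Self Identity makes $\approx$ reflexive, while Substitution of Identicals makes it symmetric and transitive and shows that the stipulations $\abm{\sc}=[\sc]$ and $\abm{F}([\tau_1],\dots,[\tau_n])=[F\tau_1\cdots\tau_n]$ are well defined on classes. Since $\cL$ has closed terms, $U$ is non-empty, and by construction every member of $U$ is the value of a closed $\cL$-term, which is exactly the ``Moreover'' clause. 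The Rigid Identity axiom lets this interpretation of $\id$ be taken time-independent, as a standard model demands, and the derivability of the Barcan formulas \eqref{barcan} is what permits a single constant domain $U$ at every time.

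Next I would set up the times. Call a set $w$ \emph{good} if it is $\el_\Q$-saturated over $\cL$, and define the canonical relation by $w\prec w'$ iff $\{\ph:\b G\ph\in w\}\sub w'$; the scheme $\ph\to\b G\b P\ph$ and its mirror make this equivalent to the backward condition $\{\ph:\b H\ph\in w'\}\sub w$, so that $\b G$ looks forward and $\b H$ backward along the single relation $\prec$. On the good sets reachable from $\D$, the transitivity, linear-future, endless-future and density axioms force $\prec$ to be transitive, weakly connected, serial in both directions, and dense. The indispensable first-order step here is an \emph{existence lemma}: if $\b F\ph\in w$ with $w$ good, there is a good $w'$ with $w\prec w'$ and $\ph\in w'$ (and dually for $\b P$). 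Proving it amounts to extending $\{\ph\}\cup\{\psi:\b G\psi\in w\}$ — the kind of construction underlying Theorem \ref{henkin} — not merely to a maximally consistent set but to one saturated over the \emph{same} $\cL$; I would secure the inherited $\forall$-completeness from the Barcan formulas \eqref{barcan} and the closed-term witnesses already present in $\cL$, rather than by adjoining fresh constants (which would enlarge $U$).

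Finally I would build the family $(w_q)_{q\in\Q}$ and prove a Truth Lemma. I would proceed by stages: start with $w_{q_0}=\D$ at a chosen rational $q_0$, enumerate all pending demands ``$\b F\ph\in w_q$'' and ``$\b P\ph\in w_q$'', and discharge each by choosing a fresh rational strictly above (resp.\ below) $q$ — density and unboundedness of $\Q$ guarantee room — and populating it, via the existence lemma, with a good set containing $\ph$ that coheres with what is already placed; weak connectedness keeps the placed sets pairwise $\prec$-comparable, so the assignment stays consistent as the order grows. (Alternatively, one extracts from the good sets reachable from $\D$ a countable dense endpoint-free strict linear order and invokes Cantor's isomorphism theorem with $(\Q,<)$.) Setting $\Prop=\wp\Q$ makes the model standard, so $\bscap=\bcap$ and the quantifier clause collapses to the Kripkean one \eqref{defkripkean}--\eqref{kripsem}. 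The Truth Lemma $\M,q,f_0\models\ph$ iff $\ph\in w_q$ then follows by induction on $\ph$ (with $f_0$ interpreting the finitely many free variables of $\D$ as constants), the $\forall$-step using $\forall$-completeness to produce, whenever $\forall x\ph\notin w_q$, a closed $\cL$-term $\tau$ with $\neg\ph(\tau/x)\in w_q$, which by \eqref{termsub} furnishes the witness $[\tau]$ defeating $\forall x\ph$ at $q$.

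The step I expect to be the main obstacle is the existence lemma in its first-order form: producing successor and predecessor sets that are fully \emph{saturated} over the same signature $\cL$, not merely maximally consistent, so that the inductive $\forall$-step has its witnesses available at every rational; this is precisely where the constant domain forced by \eqref{barcan} and the closed-term witnesses of $\cL$ must be handled with care. A secondary delicacy is guaranteeing that the constructed frame is genuinely irreflexive and cluster-free, hence order-isomorphic to $(\Q,<)$ rather than to some dense weak order — a point the stagewise construction is designed to sidestep by only ever placing sets at strictly ordered rationals.
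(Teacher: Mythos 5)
The paper does not actually prove this theorem: it marks it $\qed$ and imports it from \cite[\S\S 4.3--4.5]{gabb:temp94}, whose method it describes as ``extending a consistent set to a saturated one over an enlarged signature, and then constructing a satisfying model for the saturated set.'' Your outline is essentially a reconstruction of that construction --- term-model universe from closed terms, saturated sets as points, step-by-step placement at rationals, Truth Lemma --- so in approach you are aligned with the source the paper relies on, and your identification of the saturation-preserving existence lemma as the crux, discharged via the Barcan formulas and the closed-term witnesses already in $\cL$, is exactly right.

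Two places where real work remains hidden deserve naming. First, your phrase ``coheres with what is already placed'' conceals a separate \emph{interpolation} existence lemma: when a demand $\b F\ph\in w_q$ must be discharged and finitely many points $q<q_1<\dots<q_n$ are already populated, you must decide (using the linear-future axiom) into which gap the witness goes, and then produce a saturated $u$ with $w_{q_j}\prec u\prec w_{q_{j+1}}$ containing $\ph$ --- a two-sided constraint that the one-sided successor lemma does not deliver; without it the invariant that placed sets are pairwise $\prec$-related in the order of their rationals, which the $\b G$-direction of the Truth Lemma needs for \emph{all} pairs $q<q'$, cannot be maintained. Second, before invoking Cantor's theorem you must also schedule and discharge density and unboundedness tasks (insert a point between every two placed points, and beyond every placed point on each side), since the rationals actually used in discharging $\b F$/$\b P$ demands need not form a dense endpoint-free suborder on their own; the density and endless-future axioms are what make these extra insertions possible. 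Neither point is an error of strategy, but both are substantive lemmas rather than bookkeeping, and the second is not flagged in your outline at all.
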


In a premodel $\M$ for which each element $a\in U$ is the value of some constant term $\bar{a}$, i.e.\ $\abm{\bar{a}}=a$,  any variable assignment $f:\Var\to U$ induces a substitution operator turning each formula $\ph$ into a \emph{sentence} $\ph^f$ by replacing each free variable $x$ of $\ph$ by the constant term $\ov{fx}$.
This allows us to reduce satisfaction of formulas by assignments to truth of sentences:  

\begin{lemma} \label{substf}
$\M,t,f\models\ph$ iff   $\M,t\models\ph^f$.
\end{lemma}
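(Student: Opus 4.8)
The plan is to reduce the lemma to a finite chain of single-variable substitutions and apply the term-substitution identity \eqref{termsub} at each step. First I would observe that since $\ph^f$ is a sentence, $\M,t\models\ph^f$ is by definition equivalent to $\M,t,f\models\ph^f$; so it suffices to prove $\M,t,f\models\ph$ iff $\M,t,f\models\ph^f$ with the \emph{same} assignment $f$ on both sides.

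Next, let $x_1,\dots,x_n$ be the (finitely many) free variables of $\ph$. Each term $\ov{fx_i}$ is \emph{closed}, hence contains no variables; consequently substituting it for $x_i$ neither captures nor creates any variable occurrence, so it is automatically free for $x_i$, and the iterated substitution agrees with the one-shot simultaneous substitution defining $\ph^f$. Thus, setting $\ph_0=\ph$ and $\ph_i=\ph_{i-1}(\ov{fx_i}/x_i)$, I would have $\ph_n=\ph^f$, with the free variables of $\ph_{i-1}$ lying among $x_i,\dots,x_n$.

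I would then show $\M,t,f\models\ph_{i-1}$ iff $\M,t,f\models\ph_i$ for each $i$. By \eqref{termsub} applied to $\ph_{i-1}$ and the closed term $\ov{fx_i}$, $\M,t,f\models\ph_i$ iff $\M,t,f[\abm{\ov{fx_i}}f/x_i]\models\ph_{i-1}$. Since $\ov{fx_i}$ is closed with $\abm{\ov{fx_i}}f=\abm{\ov{fx_i}}=fx_i$, the updated assignment $f[fx_i/x_i]$ is just $f$ itself, so this is equivalent to $\M,t,f\models\ph_{i-1}$. Chaining these equivalences from $\ph_0=\ph$ up to $\ph_n=\ph^f$ yields $\M,t,f\models\ph$ iff $\M,t,f\models\ph^f$, which is the lemma.

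The only genuinely delicate point is the bookkeeping that the finite iterated substitution of the closed terms $\ov{fx_i}$ really does coincide with the single sentence $\ph^f$ and that each step meets the freeness side-condition of \eqref{termsub}; once that is granted, the equivalences are immediate from the identity $f[fx_i/x_i]=f$. (One could instead argue by a routine induction on the structure of $\ph$, but the quantifier case then forces one to track a partial substitution that leaves the bound variable untouched, which is precisely the complication the iteration argument sidesteps.)
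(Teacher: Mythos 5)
Your proposal is correct and follows essentially the same route as the paper's own proof: apply the substitution identity \eqref{termsub} together with the observation that $\abm{\ov{fx}}=fx$ gives $f[fx/x]=f$, iterate over the finitely many free variables, and finish by noting that $\ph^f$ is a sentence so its satisfaction is independent of the assignment. The paper compresses the iteration into the phrase ``applying this to all the free variables,'' whereas you spell out the bookkeeping (closedness of the $\ov{fx_i}$, freeness for substitution, agreement of iterated and simultaneous substitution), but the underlying argument is identical.
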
   
\begin{proof}
By the result \eqref{termsub},  $\M,t,f\models\ph(\ov{fx}/x)$  iff
$\M,t,f[f(x)/x]\models\ph$.    
But  $f[f(x)/x]=f$.  Applying this to all the free variables of $\ph$  shows that 
$\M,t,f\models\ph^f$ iff   $\M,t,f\models\ph$.  But $\M,t,f\models\ph^f$ iff   $\M,t\models\ph^f$ since $\ph^f$ is a sentence. 
\end{proof}

A further consequence of having every element of the universe of $\M$ being a value $\abm{\bar{a}}$ is that if $\M$ is Kripkean and $\forall x\ph$ is a sentence, then by \eqref{kripsem} and \eqref{termsub},
\begin{equation}\label{kripall}
\text{
$\M,t\models\forall x\ph$\enspace iff \enspace for all $a\in U$, \ $\M,t\models\ph(\bar{a}/x)$.}
\end{equation}

\section{Strong completeness over the reals}

We will now prove that the logic $\el_\R$ over a countable signature $\c L$ is strongly complete over the set of admissible 
$\c L$-models based on $(\R,<)$.

 Let $\D$ be an $\el_\R$-consistent set of $\c L$-formulas. Our task is to show that it is satisfied in a some model over $(\R,<)$. Let $\Lp$ be the signature obtained by adding denumerably many new constants to $\c L$.
 By Theorem \ref{henkin} there is an $\el_\R$-saturated set $\D^+$ of $\Lp$-formulas with $\D\sub\D^+$.

 Now $\D^+$ is also  $\el_\Q$-saturated over $\Lp$: as well as being $\forall$-complete it is  $\el_\Q$-consistent  as $\el_\Q$  is a sublogic of $\el_\R$, and is negation complete for $\Lp$-formulas, hence is maximally $\el_\Q$-consistent over $\Lp$.
 So we can apply Theorem \ref{LQcomplete}  to $\D^+$ and $\Lp$ to conclude that $\D^+$ is satisfiable in a standard model   based on the rational time flow. So there is a standard $\Lp$-model
$$
\M=(\Q,<,U,\ab{\dsh}^\M)
$$
such that $\M, q_0,f_0\models\D^+$ for some rational $q_0$ and some variable assignment $f_0$. Also each $a\in U$ is $\abm{\bar a}$ for some closed $\Lp$-term $\bar a$.

\begin{lemma}  \label{LRM}
If a sentence $\ph$ is an $\el_\R$-theorem over $\Lp$, then $\M\models\ph$.
\end{lemma}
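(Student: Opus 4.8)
The plan is to prove this \emph{without} ever checking that $\M$ validates the individual axioms of $\el_\R$ one by one. That route would be hopeless: the Dedekind completeness axiom \eqref{cty} is not valid in arbitrary standard models over $(\Q,<)$, precisely because $\Q$ has gaps, so one cannot expect to verify it directly in $\M$. Instead I would exploit the single fact that $\D^+$ is maximally $\el_\R$-consistent, and route everything through the universal modality $\Box$.

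The first step is to observe that validity of a sentence $\ph$ in $\M$ is equivalent to the truth of a single sentence at the single point $q_0$. Since $\ph$ is an $\el_\R$-theorem, two applications of Temporal Generalisation show that $\b G\ph$ and $\b H\ph$ are $\el_\R$-theorems as well, and then, using a truth-functional tautology and Modus Ponens, so is their conjunction with $\ph$, namely $\Box\ph=\b H\ph\land\ph\land\b G\ph$. Because a maximally $\el_\R$-consistent set contains every $\el_\R$-theorem, this gives $\Box\ph\in\D^+$.

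The conclusion then follows by reading off the semantics. From $\M,q_0,f_0\models\D^+$ we get $\M,q_0,f_0\models\Box\ph$, and the satisfaction clause for the universal modality says exactly that $\M,s,f_0\models\ph$ for every $s\in\Q$. As $\ph$ is a sentence, this truth is independent of the assignment, so $\M,s,f\models\ph$ for all $s\in\Q$ and all $f$, which is precisely $\M\models\ph$.

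I do not anticipate a genuine obstacle here; the one conceptual point to get right is to resist an axiom-by-axiom argument and instead use that $\M\models\ph$ reduces to the truth of the single sentence $\Box\ph$ at $q_0$, which membership in $\D^+$ supplies for free. Note where the strength of $\el_\R$ is actually used: the continuity axiom is never evaluated in $\M$ at all. Its only role is to guarantee, through the $\el_\R$-consistency of $\D^+$, that $\Box\ph$ is an $\el_\R$-theorem and hence lies in $\D^+$ — it is this step, and not any property of the rational flow, that carries the logic's extra content into the model.
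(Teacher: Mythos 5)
Your proof is correct and follows essentially the same route as the paper's: derive $\Box\ph$ as an $\el_\R$-theorem, place it in the maximally consistent set $\D^+$, and read off validity in $\M$ from the truth of $\Box\ph$ at $q_0$. The paper's version is just more terse, omitting the explicit appeal to Temporal Generalisation and the remark that truth of a sentence is assignment-independent.
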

\begin{proof}
If $\ph$ is an $\el_\R$-theorem, then so is $\Box\ph$, which thus belongs to the $\el_\R$-saturated $\D^+$. That gives 
$\M,q_0\models\Box\ph$. Hence every rational $q$ has  $\M,q\models\ph$.
\end{proof}

We  will extend $\M$ to an  $\Lp$-model 
$$
\M^*=(\R,<,\Prop,U,\abs{\dsh}),
$$
based on the real time flow, with the same universe $U$ as $\M$, and with $\abs{\dsh}$ assigning the same interpretation as $\abm{\dsh}$ to the individual constants and function symbols of $\Lp$.
To define $\Prop$, and $\abs{P}$ for predicate symbols $P$, we will assign to each $r\in \R$ a certain set $\D_r$ of $\Lp$-\emph{sentences}.

An $\el_\R$-consistent set of $\Lp$-sentences will be called \emph{$\el_\R$-maximal} if there is no $\el_\R$-consistent set of      $\c\Lp$-sentences that properly extends it, or equivalently if it is \emph{negation complete for sentences}, i.e.\ it contains either $\ph$ or $\neg\ph$ for any $\Lp$-sentence $\ph$.  We define $\D_r$ to be $\el_\R$-maximal for all $r\in\R$.
 For $q\in\BBQ$, let   $\D_q$ be the set of all  $\Lp$-sentences $\ph$ such that $\M,q\models\ph$.

\begin{lemma}  \label{delmax}
For all $q\in \BBQ$, $\D_q$ is  $\el_\R$-maximal over $\Lp$.
\end{lemma}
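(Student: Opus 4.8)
The plan is to verify directly the two conditions that constitute $\el_\R$-maximality for a set of $\Lp$-sentences: negation completeness for sentences, and $\el_\R$-consistency. The first is immediate from the bivalent semantics of the standard model $\M$, while the second rests on Lemma \ref{LRM}, which is the one substantive ingredient.

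First I would dispatch negation completeness. Since $\ph$ is a sentence, either $\M,q\models\ph$ or $\M,q\not\models\ph$, and by the satisfaction clause for negation the latter holds iff $\M,q\models\neg\ph$. Hence exactly one of $\ph,\neg\ph$ lies in $\D_q$; in particular at least one of them does, which is precisely negation completeness for $\Lp$-sentences.

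For consistency, suppose toward a contradiction that some finite subset $\{\ph_1,\dots,\ph_n\}\sub\D_q$ has an $\el_\R$-inconsistent conjunction, so that $\neg(\ph_1\land\cdots\land\ph_n)$ is an $\el_\R$-theorem over $\Lp$. As each $\ph_i$ is an $\Lp$-sentence, so is this negated conjunction, and Lemma \ref{LRM} gives $\M\models\neg(\ph_1\land\cdots\land\ph_n)$, hence in particular $\M,q\models\neg(\ph_1\land\cdots\land\ph_n)$. On the other hand, each $\ph_i\in\D_q$ means $\M,q\models\ph_i$, so by the clause for $\land$ we get $\M,q\models\ph_1\land\cdots\land\ph_n$, contradicting the previous line. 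Thus no finite subset of $\D_q$ has an $\el_\R$-inconsistent conjunction, i.e.\ $\D_q$ is $\el_\R$-consistent. Together with negation completeness this makes $\D_q$ $\el_\R$-maximal.

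The only subtlety worth flagging is that $\M$ is a model over the rational time flow and so a priori validates only $\el_\Q$, not $\el_\R$; the step that licenses applying $\el_\R$-theoremhood at the point $q$ is exactly Lemma \ref{LRM}, which exploits the $\el_\R$-saturation of $\D^+$ (through $\Box\ph\in\D^+$ whenever $\ph$ is an $\el_\R$-theorem) to force every $\el_\R$-theorem sentence to hold at every rational. Everything else is a direct reading of the standard-model satisfaction clauses, so I anticipate no genuine obstacle beyond invoking that lemma correctly.
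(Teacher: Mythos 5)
Your proposal is correct and follows essentially the same route as the paper: negation completeness is read off from the bivalent semantics of the standard model $\M$, and consistency is obtained by applying Lemma \ref{LRM} to the (negated) conjunction of a finite subset of $\D_q$ — you phrase it as a proof by contradiction where the paper states the contrapositive directly, but the content is identical.
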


\begin{proof}
 $\D_q$ is negation complete for $\Lp$-sentences, so it is enough to show that it is $\el_\R$-consistent. 
 Let $\ph$ be the conjunction of any finite number of members of $\D_q$. Then $\M,q\models\ph$, so $\M\not\models\neg\ph$.
  Hence by Lemma \ref{LRM}, $\neg\ph$ is not an $\el_\R$-theorem, i.e. $\ph$ is $\el_\R$-consistent as required.
 \end{proof}

For $r\notin\Q$, to define $\D_r$ we adapt a method used in \cite[p.190]{gabb:temp94}.
First define a set $\D_r^0$ to consist of each $\Lp$-sentence $\ph$ that is true in $\M$ throughout some open rational interval around $r$, i.e.\ those $\ph$ for which there exist $s,t$ with $s<r<t$ such that $\M,q\models\ph$ for all rationals $q$ having $s<q<t$.

\begin{lemma}   \label{Dprops}
A sentence $\psi$ belongs to $\D_r^0$ if it satisfies any of the following conditions.
\begin{enumerate}[\rm(1)]
\item
$\psi$ is $\b F\ph$ with  $\M,q\models\ph$  for some $q\in\BBQ$ having $r<q$.
\item
$\psi$ is $\b G\ph$ with  $\M,q\models\ph$  for all $q\in\BBQ$ having $r<q$.
\item
$\psi$ is $\b P\ph$ with  $\M,q\models\ph$  for some $q\in\BBQ$ having $q<r$.
\item
$\psi$ is $\b H\ph$ with  $\M,q\models\ph$  for all $q\in\BBQ$ having $q<r$.
\item
$\M\models\psi$.
\end{enumerate}
\end{lemma}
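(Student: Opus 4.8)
The plan is to treat the five conditions as independent sufficient conditions and verify each, the decisive tool being Lemma~\ref{LRM}: every $\el_\R$-theorem that is a sentence is valid throughout $\M$, so I may invoke the Dedekind completeness axiom even though $\M$ is built over $\Q$. (In each of (1)--(4) the subformula $\ph$ is itself a sentence, so the relevant axiom instances are sentences and Lemma~\ref{LRM} applies to them.) Conditions (5), (1) and (3) are direct. If $\M\models\psi$ then $\psi$ is true at every rational, hence throughout any $(s,t)$ with $s<r<t$. If $\psi=\b F\ph$ and $\M,q_0\models\ph$ with $r<q_0$, then $q_0$ witnesses $\M,q\models\b F\ph$ for every rational $q<q_0$; picking any rational $s<r$ makes $\b F\ph$ true throughout $(s,q_0)$, an interval around $r$. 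Condition (3) is the mirror image: a witness $q_0<r$ for $\ph$ makes $\b P\ph$ true throughout $(q_0,t)$ for any rational $t>r$.

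The crux is (2), with its mirror (4). Let $\psi=\b G\ph$ with $\ph$ true at all rationals $>r$. First, $\abm{\b G\ph}$ is upward closed in $\Q$ and contains every rational $>r$ (if $q>r$, every rational above $q$ exceeds $r$ and so satisfies $\ph$). It therefore suffices to show $\b G\ph$ is true at some rational $<r$, since upward closure would then make it true throughout an interval $(q_1,t)$ around $r$. Suppose not; then $\abm{\b G\ph}=(r,\infty)\cap\Q$. This set is left-open --- every rational in it has a smaller one in it --- so $\b G\ph\to\b P\b G\ph$, and hence $\Box(\b G\ph\to\b P\b G\ph)$, is valid in $\M$. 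Dedekind completeness, valid in $\M$ by Lemma~\ref{LRM}, then forces $\b G\ph\to\b H\ph$ to be valid in $\M$. Evaluating this at any rational $x>r$, where $\b G\ph$ holds, yields $\b H\ph$ at $x$, i.e.\ every rational $<x$ satisfies $\ph$, and in particular every rational $<r$ does. Combined with the hypothesis, $\ph$ is then true at \emph{every} rational, so $\b G\ph$ is valid in $\M$ --- contradicting $\abm{\b G\ph}=(r,\infty)\cap\Q$. Hence $\b G\ph$ holds at some rational $<r$, as required.

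Case (4) is the exact mirror image: $\abm{\b H\ph}$ is downward closed, is ``right-open'' under the analogous supposition, and the mirror completeness axiom $\Box(\b H\ph\to\b F\b H\ph)\to(\b H\ph\to\b G\ph)$ delivers the contradiction in the same way.

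I expect (2) and (4) to be the main obstacle. The one-sided hypotheses give truth of the modal formula only on the side of $r$ matching the modality, whereas membership in $\D_r^0$ demands truth on a full two-sided neighbourhood; the gap is precisely the possibility of a ``sharp cut'' of $\abm{\b G\ph}$ at the irrational $r$, and excluding it is exactly the work done by Dedekind completeness. The points to pin down carefully are that these completeness instances are genuinely available in $\M$ --- which is what Lemma~\ref{LRM} secures --- and, for (4), that the mirror of the completeness axiom is likewise an $\el_\R$-theorem (either as a mirror axiom or by derivation within $\el_\R$).
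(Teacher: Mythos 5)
Your handling of (1), (3), (5) and, crucially, (2) is essentially the paper's own argument: under the supposition that $\b G\ph$ fails at every rational below $r$, pin down $\abm{\b G\ph}$ as exactly $\{q\in\BBQ : r<q\}$, observe that this set is left-open so that $\Box(\b G\ph\to\b P\b G\ph)$ holds throughout $\M$, and then use Lemma~\ref{LRM} to bring in the Dedekind axiom \eqref{cty} and reach a contradiction. (The paper phrases the contradiction as an instance of \eqref{cty} being falsified at a rational $q>r$ where $\b G\ph$ holds but $\b H\ph$ fails, rather than deriving that $\ph$ holds everywhere; the two are interchangeable.) Your observation that the relevant axiom instances are sentences, so Lemma~\ref{LRM} genuinely applies, is a correct and necessary point.

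The one real gap is part (4). You rest it on the mirror axiom $\Box(\b H\ph\to\b F\b H\ph)\to(\b H\ph\to\b G\ph)$, and you flag that its availability needs checking --- but you do not check it, and as the paper defines $\el_\R$ (adding to $\el_\Q$ the single axiom \eqref{cty}, \emph{not} its mirror image) the mirror is not available by fiat; Lemma~\ref{LRM} only licenses instances of actual $\el_\R$-theorems. The paper closes exactly this gap by reusing the un-mirrored axiom: under the supposition that $\b H\ph\notin\D_r^0$ one gets $\abm{\b H\ph}=\{q\in\BBQ:q<r\}$, hence, setting $\psi=\neg\b H\ph$, one computes $\abm{\psi}=\abm{\b G\psi}=\{q\in\BBQ:r<q\}$, and the part-(2) argument applied to $\psi$ produces a falsifiable instance of \eqref{cty} itself. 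To complete your proof you must either supply a derivation of the mirror axiom within $\el_\R$ or adopt this substitution trick; as written, (4) is conditional on an unproved premise.
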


\begin{proof}
\begin{enumerate}[\rm(1)]
\item
If $\M,q\models\ph$ with $r<q$, then $\b F\ph$ is true in $\M$ throughout any rational interval around $r$ whose members are less than $q$, hence $\b F\ph\in\D_r^0$.

\item
Let $\M,q\models\ph$  for all rational $q$ greater than $r$. Thus the truth set $\abm{\ph}$ of $\ph$ in $\M$ includes
$\{q\in\BBQ:r<q\}$. Hence $\abm{\b G\ph}$  includes
$\{q\in\BBQ:r<q\}$.

Suppose, for the sake of contradiction, that  $\b G\ph\notin\D_r^0$. Then any open rational interval around $r$ must contain a point at which $\b G\ph$ is false in $\M$, and so this point must be less than $r$ (n.b.\ $r\notin\Q$). In particular, for any rational $q<r$, there must be a rational $q'$ with $q<q'<r$ and  
$\M,q'\not\models\b G\ph$, hence  $\M,q\not\models\b G\ph$. Altogether this shows that 
$\abm{\b G\ph}$ is exactly equal to $\{q\in\BBQ:r<q\}$.

Therefore by density of the rationals in $\R$, any point  in $\M$ at which $\b G\ph$ is true has $\b{PG}\ph$ true, so $\M\models\Box(\b G\ph\to\b{PG}\ph)$.
At the same time any rational $q>r$ has $\M,q\not\models \b G\ph \to \b H\ph$, so we now have  an instance of the $\el_\R$-axiom \eqref{cty} that is false in $\M$ at $q$. But that  contradicts Lemma \ref{LRM}.
The contradiction forces us to conclude that  $\b G\ph\in\D_r^0$.

\item
This is the mirror image of part (1).

\item
This is the mirror image of part (2), so can be proven using the mirror image of axiom \eqref{cty}. However it can also be shown from  
axiom \eqref{cty} itself, so we only need a single Dedekind completeness axiom.

To see this, suppose that  $\M,q\models\ph$  for all rational $q<r$, but $\b H\ph\notin\D_r^0$. Then reasoning dual to part (2), we get $\abm{\b H\ph}=\{q\in\BBQ:q<r\}$. Hence as $r\notin\BBQ$, 
$
\{q\in\BBQ:r<q\}=\abm{\neg\b H\ph}.
$
From this we see  that $\{q\in\BBQ:r<q\}=\abm{\b G\psi}$, where $\psi$ is $\neg\b H\ph$.
Hence as in the proof of part (2) we get that $\M\models\Box(\b G\psi\to\b{PG} \psi)$, while any rational $q>r$ has 
$\M,q\not\models \b G\psi \to \b H\psi$, so again we get the contradiction of  an instance of axiom \eqref{cty} falsifiable in $\M$.
\item
If $\M\models\psi$, then any open rational interval around $r$ has $\psi$ true throughout it.
\end{enumerate}
\end{proof}

\begin{lemma}
$\D_r^0$ is  $\el_\R$-consistent.
\end{lemma}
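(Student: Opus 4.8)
The plan is to argue directly from the definition of $\el_\R$-consistency for sets: it suffices to show that the conjunction of any finitely many members of $\D_r^0$ is $\el_\R$-consistent, i.e.\ that its negation is not an $\el_\R$-theorem. The strategy mirrors the proof of Lemma \ref{delmax}: rather than reason syntactically, I would locate a single point of the rational model $\M$ at which the whole finite conjunction is true, and then invoke the contrapositive of Lemma \ref{LRM} to deduce consistency. Concretely, if $\ph$ is such a conjunction and $\M,q\models\ph$ for some rational $q$, then $\M\not\models\neg\ph$, so by Lemma \ref{LRM} $\neg\ph$ is not an $\el_\R$-theorem, which is exactly what $\el_\R$-consistency of $\ph$ requires.

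The substantive step is producing that satisfying point, and here the definition of $\D_r^0$ does the work. Take finitely many sentences $\ph_1,\dots,\ph_n\in\D_r^0$. By definition each $\ph_i$ is true in $\M$ throughout some open rational interval around $r$; say $\M,q\models\ph_i$ for all rationals $q$ with $s_i<q<t_i$, where $s_i<r<t_i$. Setting $s=\max_i s_i$ and $t=\min_i t_i$ gives an open interval $(s,t)$ with $s<r<t$ on which every $\ph_i$ is simultaneously true at rational points. By density of $\Q$ in $\R$ the interval $(s,t)$ contains a rational $q$, and at this $q$ we have $\M,q\models\ph_i$ for each $i$, hence $\M,q\models\ph$ where $\ph=\ph_1\land\cdots\land\ph_n$.

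Combining the two steps finishes the argument: from $\M,q\models\ph$ we get $\M\not\models\neg\ph$, so Lemma \ref{LRM} yields that $\neg\ph$ is not an $\el_\R$-theorem, i.e.\ $\ph$ is $\el_\R$-consistent. Since the finite subset was arbitrary, $\D_r^0$ is $\el_\R$-consistent. I do not expect a genuine obstacle here: the only real content is the observation that the intersection of finitely many open intervals about $r$ is again a nonempty open interval, together with the density of the rationals guaranteeing a witness inside it. The one point worth stating carefully is that all members of $\D_r^0$ are \emph{sentences}, so that truth at the single point $q$ (notation $\M,q\models\cdot$) is well defined independently of any variable assignment, allowing the clean appeal to Lemma \ref{LRM}.
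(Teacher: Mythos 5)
Your proof is correct and follows essentially the same route as the paper: find a rational $q$ in the (nonempty, by finiteness) intersection of the witnessing intervals, observe that the finite conjunction is true at $q$ in $\M$, and conclude consistency. The only cosmetic difference is that the paper concludes via ``$\ph\in\D_q$ and $\D_q$ is $\el_\R$-consistent by Lemma \ref{delmax}'', whereas you unfold that lemma and appeal to Lemma \ref{LRM} directly; these are the same argument.
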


\begin{proof}
Let $\ph$ be the conjunction of any finite number of members of $\D_r^0$ . Each of these members is true in $\M$ throughout some rational interval around $r$. Choose a rational number $q$ that belongs to all of these finitely many intervals. Then $\M,q\models\ph$, so $\ph\in\D_q$. Hence $\ph$ is $\el_\R$-consistent as required, because $\D_q$ is 
 $\el_\R$-consistent by Lemma \ref{delmax}.
\end{proof}

If follows that  $\D_r^0$ has  $\el_\R$-maximal extensions. We choose one to be $\D_r$. That completes the definition of $\D_r$ for all reals $r$.

For each $\Lp$-sentence $\ph$ define $\abr{\ph} =\{r\in\R:\ph\in\D_r\}$, and put
$$
\Prop=\{\abr{\ph}: \ph \text{ is an $\Lp$-sentence}\}.
$$
 For an $n$-ary predicate symbol $P$, define $\abs{P}:U^n\to\Prop$ by
  $$
  \abs{P}(a_1,\dots,a_n)=\abr{P\ov{a_1}\cdots\ov{a_n}}.
  $$
That completes the definition of $\M^*$ as a \emph{premodel}.

\begin{lemma}  \label{phiR}
For any $\Lp$-sentence $\ph$,
$\M\models\ph$ iff $\abr{\ph}=\R$.
\end{lemma}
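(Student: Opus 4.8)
The plan is to establish the biconditional by unpacking each side as a statement about membership of $\ph$ in the sets $\D_r$, and then exploiting the fact that validity in $\M$ is governed entirely by the rational points, since $\M$ is based on $(\Q,<)$. The natural case split is between rational and irrational reals $r$, because $\D_q$ and $\D_r$ were defined by different recipes.

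For the implication $\abr{\ph}=\R\imp\M\models\ph$, I would use only the rational reals. If $\abr{\ph}=\R$, then in particular every rational $q$ lies in $\abr{\ph}$, so $\ph\in\D_q$; by the very definition of $\D_q$ this says $\M,q\models\ph$. Since this holds at every rational and $\ph$ is a sentence, we conclude $\M\models\ph$. Equivalently, the contrapositive is immediate: if $\M\not\models\ph$ then some rational $q$ has $\M,q\not\models\ph$, whence $\ph\notin\D_q$ and $q\notin\abr{\ph}$.

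For the converse $\M\models\ph\imp\abr{\ph}=\R$, I would show $\ph\in\D_r$ for every real $r$. At a rational $r=q$, validity gives $\M,q\models\ph$, so $\ph\in\D_q$ by definition. At an irrational $r$, the key step is to invoke Lemma~\ref{Dprops}(5): since $\M\models\ph$, that clause places $\ph$ in $\D_r^0$, and hence in the chosen $\el_\R$-maximal extension $\D_r\supseteq\D_r^0$. The only point worth flagging is the asymmetry between the two directions: the ``if'' half needs nothing beyond the defining property of $\D_q$ at the rationals, whereas the ``only if'' half must additionally certify the irrational cuts, and this is exactly what clause~(5) delivers. There is no serious obstacle remaining, because the substantive work---that Dedekind completeness forces the $\b G$, $\b H$, $\b F$, $\b P$ sentences to behave coherently at irrational cuts---has already been absorbed into Lemma~\ref{Dprops}.
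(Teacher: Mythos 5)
Your proof is correct and follows essentially the same route as the paper's: both directions are handled by splitting into rational and irrational $r$, using the definition of $\D_q$ at the rationals and Lemma~\ref{Dprops}(5) together with $\D_r^0\sub\D_r$ at the irrationals. Nothing further is needed.
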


\begin{proof}
Let $\M\models\ph$. Then for any real $r$, if $r\in\BBQ$ we have $\M,r\models\ph$, so $\ph\in\D_r$. But if  $r\notin\BBQ$, then $\ph\in\D_r^0$ by Lemma \ref{Dprops}(5), so again $\ph\in\D_r$. In both cases $r\in\abr{\ph}$.

For the converse, let $\abr{\ph}=\R$. Then for any $q\in\Q$, $\ph\in\D_q$ and so $\M,q\models\ph$. Hence $\M\models\ph$.
\end{proof}

 Standard properties of maximally consistent sets ensure that
$$
\abr{\ph}\cap\abr{\psi}=\abr{\ph\land\psi}, \enspace
\abr{\ph}\cup\abr{\psi}=\abr{\ph\lor\psi}, \enspace
 \R-\abr{\ph}= \abr{\neg\ph}, 
$$
so $\Prop$ is a Boolean set algebra. It is also closed under the temporal operators $[<]$ and $[>]$, by

\begin{lemma}   \label{GHsem}
For any $\Lp$-sentence $\ph$,
$[<]\abr{\ph}=\abr{\b G\ph}$ and  $[>]\abr{\ph}=\abr{\b H\ph}$.
\end{lemma}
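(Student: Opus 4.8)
The plan is to unfold the two operators and collapse everything to a statement about the rationals. Since $r\in[<]\abr{\ph}$ means $\ph\in\D_s$ for every real $s>r$, while $r\in\abr{\b G\ph}$ means $\b G\ph\in\D_r$, the first identity amounts to showing, for every real $r$,
$$\b G\ph\in\D_r \iff \forall s\in\R\,(s>r\imp\ph\in\D_s).$$
The identity $[>]\abr{\ph}=\abr{\b H\ph}$ is the exact mirror image, so I would prove only the $\b G$-case and obtain the $\b H$-case by the mirroring that swaps $\b G\leftrightarrow\b H$ and $\b F\leftrightarrow\b P$ and replaces parts (1),(2) of Lemma~\ref{Dprops} by their mirrors (3),(4).

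First I would establish a bridging claim: the right-hand condition ``$\ph\in\D_s$ for all real $s>r$'' is equivalent to the purely rational condition ``$\M,q\models\ph$ for all rational $q>r$''. One direction is immediate because $\D_q=\{\psi:\M,q\models\psi\}$ for rational $q$. For the other, assume $\M,q\models\ph$ for all rational $q>r$; a rational $s=q>r$ gives $\ph\in\D_q$ at once, and for an irrational $s>r$ I choose a rational $a$ with $r<a<s$, so that $\ph$ is true in $\M$ throughout a rational interval $(a,b)$ containing $s$ whose points all exceed $r$, whence $\ph\in\D_s^0\sub\D_s$. This reduces the target equivalence to $\b G\ph\in\D_r \iff \M,q\models\ph$ for all rational $q>r$.

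The ``if'' direction of that reduced equivalence is routine. If $r$ is rational, then ``$\M,q\models\ph$ for all rational $q>r$'' is, by the truth definition of $\b G$ on the flow $(\Q,<)$, exactly $\M,r\models\b G\ph$, so $\b G\ph\in\D_r$; if $r$ is irrational, Lemma~\ref{Dprops}(2) puts $\b G\ph$ into $\D_r^0\sub\D_r$. The ``only if'' direction for rational $r$ is equally direct from the truth definition of $\b G$.

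The main obstacle is the ``only if'' direction when $r$ is irrational: here $\b G\ph$ may have entered $\D_r$ through the $\el_\R$-maximal extension rather than through $\D_r^0$, so nothing can be read off $\D_r^0$ directly. I would argue by contraposition. Suppose some rational $q>r$ has $\M,q\models\neg\ph$; then Lemma~\ref{Dprops}(1), applied to $\neg\ph$, places $\b F\neg\ph$ in $\D_r^0\sub\D_r$. Unwinding $\b F=\neg\b G\neg$ together with the theorem $\b G\neg\neg\ph\bpr\b G\ph$ shows $\b F\neg\ph\bpr\neg\b G\ph$, so $\el_\R$-maximality of $\D_r$ forces $\neg\b G\ph\in\D_r$, contradicting $\b G\ph\in\D_r$. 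Hence no such $q$ exists, which is the desired conclusion. Chaining the ``only if'' direction, the bridging claim and the ``if'' direction gives the displayed equivalence and thus $[<]\abr{\ph}=\abr{\b G\ph}$; mirroring yields $[>]\abr{\ph}=\abr{\b H\ph}$.
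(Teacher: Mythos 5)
Your proof is correct and follows essentially the same route as the paper's: the same split into rational and irrational cases, the same use of Lemma~\ref{Dprops}(1) (via contraposition, through the equivalence of $\b F\neg\ph$ with $\neg\b G\ph$ in the maximal set $\D_r$) and of Lemma~\ref{Dprops}(2), and the same appeal to mirror images for the $\b H$ clause. The one point of divergence is your bridging claim for irrational $s>r$: you put $\ph$ into $\D_s^0$ directly from the definition, by choosing a rational interval around $s$ lying entirely above $r$, whereas the paper routes through Lemma~\ref{Dprops}(3) to get $\b{PG}\ph\in\D_s^0$ and then applies the $\el_\R$-theorem $\b{PG}\ph\to\ph$; your version is a legitimate and slightly more elementary shortcut, trading a deductive step inside the logic for a direct appeal to the construction of $\D_s^0$.
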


\begin{proof}
Let  $r\in [<]\abr{\ph}$. Then any $s$ with $r<s$ has $s\in\abr{\ph}$, i.e.\ $\ph\in\D_s$. In particular, any rational $q>r$ has
$\ph\in\D_q$, i.e. $\M,q\models\ph$. If $r\in\BBQ$, this implies $\M,r\models\b G\ph$,  so $\b G\ph\in \D_r$, hence $r\in \abr{\b G\ph}$. If $r\notin\BBQ$, it gives  $\b G\ph\in \D_r^0\sub\D_r$ by Lemma \ref{Dprops}(2), hence again $r\in \abr{\b G\ph}$. That proves
$[<]\abr{\ph}\sub\abr{\b G\ph}$.

For the converse inclusion, let $\b G\ph\in\D_r$. 
If $r\in\BBQ$, then $\M,r\models\b G\ph$, hence
\begin{equation}\label{ratcase}
\text{every rational $q$ greater than $r$ has $\M,q\models\ph$. }
\end{equation}
But if $r\notin\BBQ$, then since $\neg\b G\ph\notin\D_r$ and
 $\b F\neg\ph\to\neg\b G\phi$ is an $\el_\R$-theorem, we must have  $\b F\neg\ph\notin\D_r$, hence $\b F\neg\ph\notin\D_r^0$. This also gives \eqref{ratcase}, by Lemma \ref{Dprops}(1).
Now to show that  $r\in [<]\abr{\ph}$, take any $s$ with $r<s$. 
Then we need to show that $\ph\in\D_s$.
If $s\in\BBQ$, \eqref{ratcase} immediately gives $\M,s\models\ph$, hence $\ph\in\D_s$.
If $s\notin\BBQ$, take a rational $q_1$ with $r<q_1<s$. Then  \eqref{ratcase}  implies that every  rational $q$ greater than $q_1$ has  $\M,q\models\ph$. Hence $\M,{q_1}\models\b G\ph$. Therefore $\b{PG}\ph\in\D_s^0$ by Lemma \ref{Dprops}(3).
By the $\el_\R$-theorem $\b{PG}\ph\to\ph$,  this implies $\ph\in\D_s$ as required.

That concludes the proof of the first equation of the lemma. The proof of the second is its mirror image.
\end{proof}

\begin{lemma}  \label{allchar}
If $\forall x\ph$ is an $\Lp$-sentence, then in $\M^*$,\enspace
$\abr{\forall x\ph}=\bscap_{a\in U}\abr{\ph(\bar{a}/x)}$.
\end{lemma}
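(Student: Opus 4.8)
The plan is to show that $\abr{\forall x\ph}$ is the greatest lower bound of the family $\{\abr{\ph(\bar a/x)}:a\in U\}$ in the poset $(\Prop,\sub)$. Since $\abr{\forall x\ph}\in\Prop$ holds by the very definition of $\Prop$ (as $\forall x\ph$ is an $\Lp$-sentence), establishing this greatest-lower-bound property immediately yields that the glb exists, is admissible, and equals $\bscap_{a\in U}\abr{\ph(\bar a/x)}$ as given by $\bscap\c Z=\bcup\{Y\in\Prop:Y\sub\bcap\c Z\}$. Two translations will drive the argument. First, the Boolean identities for $\Prop$ together with Lemma \ref{phiR} give, for $\Lp$-sentences $\chi_1,\chi_2$, that $\abr{\chi_1}\sub\abr{\chi_2}$ iff $\abr{\chi_1\to\chi_2}=\R$ iff $\M\models\chi_1\to\chi_2$. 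Second, since $\M$ is a standard (hence Kripkean) $\Q$-model in which every element of $U$ is named by a closed term, equation \eqref{kripall} computes the classical quantifier in $\M$ by $\M,q\models\forall x\ph$ iff $\M,q\models\ph(\bar a/x)$ for all $a\in U$.

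For the inclusion $\abr{\forall x\ph}\sub\bscap_{a\in U}\abr{\ph(\bar a/x)}$, I first verify that $\abr{\forall x\ph}$ is a lower bound. Each Universal Instantiation instance $\forall x\ph\to\ph(\bar a/x)$ is an $\el_\R$-theorem (here $\bar a$ is closed, hence free for $x$) and is a sentence, so it is valid in $\M$ by Lemma \ref{LRM}; the first translation then yields $\abr{\forall x\ph}\sub\abr{\ph(\bar a/x)}$ for every $a\in U$. Hence $\abr{\forall x\ph}\sub\bcap_{a\in U}\abr{\ph(\bar a/x)}$, so $\abr{\forall x\ph}$ is itself one of the admissible sets $Y$ occurring in the union that defines $\bscap$, giving the desired inclusion.

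For the reverse inclusion it suffices to show that every admissible $Y$ with $Y\sub\bcap_{a\in U}\abr{\ph(\bar a/x)}$ satisfies $Y\sub\abr{\forall x\ph}$, since then the union of all such $Y$ is contained in $\abr{\forall x\ph}$. Write $Y=\abr{\psi}$ for an $\Lp$-sentence $\psi$. The hypothesis gives $\abr{\psi}\sub\abr{\ph(\bar a/x)}$ for every $a\in U$, which by the first translation means $\M\models\psi\to\ph(\bar a/x)$ for every $a\in U$. Now fix a rational $q$ with $\M,q\models\psi$; then $\M,q\models\ph(\bar a/x)$ for all $a\in U$, so $\M,q\models\forall x\ph$ by \eqref{kripall}. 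As $q$ was arbitrary, $\M\models\psi\to\forall x\ph$, whence $\abr{\psi}\sub\abr{\forall x\ph}$, i.e.\ $Y\sub\abr{\forall x\ph}$, as required.

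The one genuinely substantive step is this reverse inclusion, whose crux is the passage from ``$\psi$ entails every instance $\ph(\bar a/x)$'' to ``$\psi$ entails $\forall x\ph$''. This works only because the surrogate rational model $\M$ is Kripkean and named by closed terms, so that the admissible greatest-lower-bound quantifier of $\M^*$ is pinned down by the genuinely classical (intersection) quantifier of $\M$ via \eqref{kripall}; it is precisely here that the good behaviour engineered into $\M$ by Theorem \ref{LQcomplete} is exploited. The lower-bound direction and the membership $\abr{\forall x\ph}\in\Prop$ are routine.
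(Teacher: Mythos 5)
Your proof is correct and follows essentially the same route as the paper's: the lower-bound direction via Universal Instantiation together with the admissibility of $\abr{\forall x\ph}$, and the reverse direction by writing an admissible lower bound as $\abr{\psi}$ and transferring $\M\models\psi\to\forall x\ph$ through Lemma \ref{phiR}, using the Kripkean clause \eqref{kripall} in the rational model. The only differences are presentational (you argue the first inclusion via validity in $\M$ rather than closure of the maximal sets $\D_r$ under modus ponens, and the second inclusion set-wise rather than pointwise).
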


\begin{proof}
For any $r\in \R$, if $\forall x\ph\in\D_r$, then for all $a\in U$, by the  Universal Instantiation axiom  we get $\ph(\bar{a}/x)\in \D_r$, as maximal sets are closed under modus ponens.
Thus   
$\abr{\forall x\ph}\sub\bcap_{a\in U}\abr{\ph(\bar{a}/x)}$.
But  $\abr{\forall x\ph}\in\Prop$, so then $\abr{\forall x\ph}\sub\bscap_{a\in U}\abr{\ph(\bar{a}/x)}$.

Conversely, let $r\in \bscap_{a\in U}\abr{\ph(\bar{a}/x)}$. Then there is some $X\in\Prop$ with $r\in X$ and
\begin{equation} \label{Xsub}
X\sub \bcap_{a\in U}\abr{\ph(\bar{a}/x)}.
\end{equation}
Now  $X=\abr{\psi}$ for some sentence $\psi$. We show that $\M\models\psi\to\forall x\ph$. For if $\M,q\models\psi$, then $\psi\in\D_q$, hence
$q\in X$, so by \eqref{Xsub}, for all $a\in U$ we get $q\in\abr{\ph(\bar{a}/x)}$, so $\M,q\models\ph(\bar{a}/x)$.
Then $\M,q\models\forall x\ph$ as $\M$ is Kripkean \eqref{kripall}. This shows that $\M,q\models\psi\to\forall x\ph$ for all $q$ in $\BBQ$ as claimed.

By Lemma \ref{phiR} it follows that  $\abr{\psi\to\forall x\ph}=\R$. This yields $\abr{\psi}\sub \abr{\forall x\ph}$.  But $r\in\abr{\psi}$, so we get  $r\in \abr{\forall x\ph}$ as required to complete the proof.
\end{proof}

\begin{theorem}
For any $\Lp$-sentence $\ph$,  $\abs{\ph}=\abr{\ph}$, i.e.\ for all $r\in\R$, 
$\M^*,r\models\ph$ iff $\ph\in\D_r$.
\end{theorem}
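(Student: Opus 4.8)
The plan is to establish this statement as the Truth Lemma for the constructed model $\M^*$, proceeding by induction on the number of connectives, quantifiers and modalities occurring in the $\Lp$-sentence $\ph$. Because replacing a free variable by a \emph{closed} term does not increase this measure, the substitution instance $\ph(\ov a/x)$ counts as strictly simpler than $\forall x\ph$, so the induction hypothesis will be available precisely where it is needed in the quantifier case. The strategy exploits that all the genuinely structural work has already been done in the preceding lemmas: the Boolean cases follow from $\Prop$ being a Boolean set algebra with $\abr{\ph\land\psi}=\abr{\ph}\cap\abr{\psi}$ and $\abr{\neg\ph}=\R-\abr{\ph}$, the temporal cases follow from Lemma \ref{GHsem}, and the quantifier case from Lemma \ref{allchar}. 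Thus the proof largely amounts to threading the induction hypothesis through these identities.

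For the atomic base cases I would argue as follows. If $\ph$ is $\tau_1\id\tau_2$ with $\tau_1,\tau_2$ closed, then $\abs{\tau_1\id\tau_2}$ is $\R$ or $\emp$ according as $\abs{\tau_1}=\abs{\tau_2}$ or not; since $\M^*$ interprets the constants and function symbols of $\Lp$ exactly as $\M$ does, this is governed by whether $\abm{\tau_1}=\abm{\tau_2}$, i.e.\ by whether $\M\models\tau_1\id\tau_2$ (the identity being rigid). Lemma \ref{phiR}, applied to $\tau_1\id\tau_2$ or to its negation, then identifies this with $\abr{\tau_1\id\tau_2}$ being $\R$ or $\emp$. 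If $\ph$ is $P\tau_1\cdots\tau_n$ with closed terms, then by the definition of $\abs{P}$ we have $\abs{P\tau_1\cdots\tau_n}=\abs{P}(a_1,\dots,a_n)=\abr{P\ov{a_1}\cdots\ov{a_n}}$, where $a_i=\abs{\tau_i}$ and $\ov{a_i}$ names $a_i$, so that $\abm{\tau_i}=\abm{\ov{a_i}}=a_i$. The point to check is that $\abr{P\tau_1\cdots\tau_n}=\abr{P\ov{a_1}\cdots\ov{a_n}}$: from $\abm{\tau_i}=\abm{\ov{a_i}}$ we get $\M\models\tau_i\id\ov{a_i}$, hence $\tau_i\id\ov{a_i}\in\D_r$ for every $r$ by Lemma \ref{phiR}, and then the $\el_\R$-maximality of each $\D_r$ together with the Substitution of Identicals axiom gives $P\tau_1\cdots\tau_n\in\D_r$ iff $P\ov{a_1}\cdots\ov{a_n}\in\D_r$.

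The quantifier case is where I expect the real work to lie, and it is the step to set up most carefully. For a sentence $\forall x\ph$ the admissible semantics of $\M^*$ gives $\abs{\forall x\ph}=\bscap_{a\in U}\abs{\ph}f[a/x]$ for any assignment $f$. The term-substitution identity \eqref{termsub} lets me rewrite each $\abs{\ph}f[a/x]$ as $\abs{\ph(\ov a/x)}$, using $\abs{\ov a}=a$; here $\ph(\ov a/x)$ is a sentence because the only possible free variable of $\ph$ is $x$, and $\ov a$ is closed, hence free for $x$. Applying the induction hypothesis to the simpler sentence $\ph(\ov a/x)$ yields $\abs{\ph(\ov a/x)}=\abr{\ph(\ov a/x)}$, so $\abs{\forall x\ph}=\bscap_{a\in U}\abr{\ph(\ov a/x)}$, and this equals $\abr{\forall x\ph}$ by Lemma \ref{allchar}. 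The one thing demanding attention is that the operation $\bscap$ on the left (the greatest-lower-bound operation of $\M^*$) is computed with respect to the very same $\Prop$ used in Lemma \ref{allchar}, so the two expressions genuinely coincide rather than merely being given by formally similar formulas; once the induction hypothesis has converted the $\abs{\dsh}$'s into $\abr{\dsh}$'s this is automatic.

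The remaining inductive steps are immediate applications of the induction hypothesis: $\abs{\b G\ph}=[<]\abs{\ph}=[<]\abr{\ph}=\abr{\b G\ph}$ by Lemma \ref{GHsem}, with $\b H\ph$ handled by the mirror equation, and $\abs{\ph\land\psi}=\abs{\ph}\cap\abs{\psi}=\abr{\ph}\cap\abr{\psi}=\abr{\ph\land\psi}$ together with $\abs{\neg\ph}=\R-\abs{\ph}=\R-\abr{\ph}=\abr{\neg\ph}$ from the Boolean identities. Combining all cases gives $\abs{\ph}=\abr{\ph}$ for every $\Lp$-sentence $\ph$, which is the asserted equivalence $\M^*,r\models\ph$ iff $\ph\in\D_r$.
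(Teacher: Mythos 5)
Your proposal is correct and follows essentially the same route as the paper's proof: induction on logical complexity, with the Boolean, temporal and quantifier steps discharged by the Boolean identities, Lemma \ref{GHsem} and Lemma \ref{allchar} respectively, and the atomic cases settled via Lemma \ref{phiR}. The only cosmetic difference is in the predicate base case, where you pass through the identities $\tau_i\id\ov{a_i}$ and Substitution of Identicals, while the paper observes directly that the biconditional $P\ov{a_1}\cdots\ov{a_n}\leftrightarrow P\tau_1\cdots\tau_n$ is true throughout $\M$ and hence lies in every $\D_r$; both arguments are sound.
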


\begin{proof}
By induction on the number of connectives  and quantifiers of $\ph$. 

If $\ph$ is an atomic sentence $P\tau_1\cdots\tau_n$, then the $\tau_i$ are closed terms and
 the semantics gives 
$$
\abs{\ph}=\abs{P}(\abs{\tau_1},\dots,\abs{\tau_n}).
$$
Putting $a_i=\abs{\tau_i}$, the definition of 
$\abs{P}$ yields 
$\abs{\ph}= \abr{P\ov{a_1}\cdots\ov{a_n}}$.

Now the models $\M$ and $\M^*$ agree on all constant terms, so $\abm{\ov{a_i}}=a_i=\abm{\tau_i}$. Hence the sentence
$$
P\ov{a_1}\cdots\ov{a_n} \leftrightarrow P\tau_1\cdots\tau_n
$$
is true throughout $\M$, so belongs to every $\D_r$ by Lemma \ref{phiR}.
Therefore
$\abr{P\ov{a_1}\cdots\ov{a_n}} =\abr{P\tau_1\cdots\tau_n}$, giving
$\abs{P\tau_1\cdots\tau_n} =\abr{P\tau_1\cdots\tau_n}$ as required.

The other base case is when the sentence $\ph$ is an identity $\tau_1\id\tau_2$. Then we either have  
$\abs{\tau_1}=\abs{\tau_2}$ and $\abs{\tau_1\id\tau_2}=\R$, or 
$\abs{\tau_1}\ne\abs{\tau_2}$ and $\abs{\tau_1\id\tau_2}=\emp$.
Since $\abs{\tau_i}=\abm{\tau_i}$, the first option gives  $\abm{\tau_1}=\abm{\tau_2}$, hence $\M\models \tau_1\id\tau_2$, and so $\abr{\tau_1\id\tau_2}=\R$ by Lemma \ref{phiR}.
The second option gives $\abm{\tau_1}\ne\abm{\tau_2}$, hence $\M\models \neg(\tau_1\id\tau_2)$, 
so  $\abr{\neg(\tau_1\id\tau_2)}=\R$, and thus $\abr{\tau_1\id\tau_2}=\emp$.
Both options have $\abs{\tau_1\id\tau_2}=\abr{\tau_1\id\tau_2}$ as required.

The inductive cases for the Boolean connectives are standard by properties of maximal sets, e.g.\ assuming the result for $\ph$ we have
$$
\abs{\neg\ph}=\R-\abs{\ph}=  \R-\abr{\ph}=\abr{\neg\ph}.
$$

For the temporal modalities we have

$$
\begin{array}{ll@{\qquad}l}
\abs{\b G\ph} &= [<] \abs{\ph} &\text{semantics of }\b G
\\
&=  [<] \abr{\ph}  &\text{induction hypothesis}
\\
&= \abr{\b G\ph}  &\text{Lemma \ref{GHsem}},
\end{array}
$$
and similarly for $\b H$. For the quantifiers we have

$$
\begin{array}{ll@{\qquad}l}
\abs{\forall x\ph} &=  \bscap_{a\in U}  \abs{\ph(\bar{a} /x)}   &\text{semantics of  $\forall$ and Substitution \eqref{termsub} }
\\
&= \bscap_{a\in U} \abr{\ph(\bar{a}/x)}  &\text{induction hypothesis}
\\
&= \abr{\forall x\ph} &\text{Lemma \ref{allchar}.}
\end{array}
$$
\end{proof}
\begin{corollary}
$\M^*$ is an $\Lp$-model, i.e.\ any $\Lp$-formula $\ph$ is admissible in $\M$. 
\end{corollary}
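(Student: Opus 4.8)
The plan is to reduce admissibility of an arbitrary $\Lp$-formula to the case of sentences, which has already been settled: the preceding theorem shows $\abs{\ph}=\abr{\ph}$ for every $\Lp$-sentence $\ph$, and $\abr{\ph}\in\Prop$ holds by the very definition of $\Prop$. So the content of the corollary is entirely in passing from sentences to formulas with free variables, and from truth sets to satisfaction under assignments.

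First I would observe that $\M^*$ inherits from $\M$ the property that every element of the universe is named by a closed term. Indeed $\M^*$ has the same universe $U$ as $\M$ and assigns the same interpretation as $\abm{\dsh}$ to the constants and function symbols of $\Lp$, so for each $a\in U$ we have $\abs{\bar a}=\abm{\bar a}=a$. Hence the substitution apparatus underlying Lemma \ref{substf} is available in $\M^*$: any assignment $f$ induces, for each formula $\ph$, the sentence $\ph^f$ obtained by replacing each free variable $x$ of $\ph$ by the closed term $\ov{fx}$, and Lemma \ref{substf} (a general fact about premodels with named elements) applies verbatim to $\M^*$.

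Now I would fix an arbitrary $\Lp$-formula $\ph$ and assignment $f$. Applying Lemma \ref{substf} to $\M^*$ gives $\M^*,r,f\models\ph$ iff $\M^*,r\models\ph^f$ for every $r\in\R$, whence $\abs{\ph}f=\abs{\ph^f}$. Since $\ph^f$ is a sentence, the preceding theorem yields $\abs{\ph^f}=\abr{\ph^f}$, and $\abr{\ph^f}\in\Prop$ by the definition of $\Prop$. Thus $\abs{\ph}f=\abr{\ph^f}\in\Prop$. As $f$ was arbitrary, $\ph$ is admissible in $\M^*$, and as $\ph$ was arbitrary, every $\Lp$-formula is admissible, so $\M^*$ is an $\Lp$-model by definition.

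I do not expect a genuine obstacle here; the corollary is an immediate harvest of the theorem. The one point worth flagging is conceptual rather than technical: admissibility quantifies over all formulas and all variable assignments, whereas the theorem speaks only of sentences and their single truth sets. The bridge is precisely the fact that every element of $U$ is the value of a closed term, which lets one absorb an assignment $f$ into a substitution and collapse $\abs{\ph}f$ to the truth set of the sentence $\ph^f$. Once that reduction is in place, membership in $\Prop$ is built into the construction of $\Prop$ itself.
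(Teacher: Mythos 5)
Your proof is correct and follows exactly the paper's own argument: apply Lemma \ref{substf} to $\M^*$ (legitimate since $\M^*$ shares the universe and term interpretations of $\M$, so every element is still named by a closed term) to get $\abs{\ph}f=\abs{\ph^f}$, then use the theorem and the definition of $\Prop$ to conclude $\abs{\ph^f}=\abr{\ph^f}\in\Prop$. The paper states this in one line; your version merely spells out the justification for transferring Lemma \ref{substf} to $\M^*$.
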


\begin{proof}
For any variable assignment $f:\Var\to U$, by the version of Lemma \ref{substf} that holds for $\M^*$ we have $\abs{\ph}f=\abs{\ph^f}=\abr{\ph^f}\in\Prop$, as required.
\end{proof}

\begin{corollary}
\begin{enumerate}[\rm(1)]
\item
For any sentence $\ph$ and $q\in\BBQ$, \enspace
$\M^*,q\models\ph$ iff $\M,q\models\ph$.
\item
For any formula $\ph$,  $q\in\BBQ$, and $f:\Var\to U$, \enspace
$\M^*,q,f\models\ph$ iff $\M,q,f\models\ph$.
\end{enumerate}
\end{corollary}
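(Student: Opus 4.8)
The plan is to read off both equivalences directly from the Theorem just proved, combined with the way the rational sets $\D_q$ were fixed at the start of the construction; no new model-theoretic work is needed.

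For part (1), I would recall that for $q\in\BBQ$ the set $\D_q$ was \emph{defined} to be exactly the set of $\Lp$-sentences $\ph$ with $\M,q\models\ph$. The Theorem states that $\M^*,r\models\ph$ iff $\ph\in\D_r$ for every real $r$, in particular for $r=q$. Chaining these two facts gives, for any $\Lp$-sentence $\ph$,
$$
\M^*,q\models\ph \iff \ph\in\D_q \iff \M,q\models\ph,
$$
which is the claim. Note that only the rational instance of the Theorem is used; nothing about $\D_r$ for irrational $r$ enters here.

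For part (2), I would reduce the statement about an arbitrary formula $\ph$ to the sentence case of part (1) by passing to the sentence $\ph^f$. Since every element of $U$ is the value of a constant $\Lp$-term in both $\M$ and $\M^*$, Lemma \ref{substf} is available in each model: $\M,q,f\models\ph$ iff $\M,q\models\ph^f$, and likewise $\M^*,q,f\models\ph$ iff $\M^*,q\models\ph^f$ (this is the version of that lemma already invoked in the preceding corollary). As $\ph^f$ is a sentence, part (1) applies to it, so
$$
\M^*,q,f\models\ph \iff \M^*,q\models\ph^f \iff \M,q\models\ph^f \iff \M,q,f\models\ph.
$$

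I do not expect any genuine obstacle: the substance was already carried by the Theorem and by the choice of $\D_q$ for rational $q$, and all that remains is to verify they fit together. The single point to keep straight is that in part (2) one must apply part (1) to the sentence $\ph^f$ rather than to $\ph$ itself, which is exactly why Lemma \ref{substf}, instantiated in both $\M$ and $\M^*$, is needed as the bridge.
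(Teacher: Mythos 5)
Your proposal is correct and follows essentially the same route as the paper: part (1) is obtained by chaining the Theorem ($\M^*,q\models\ph$ iff $\ph\in\D_q$) with the definition of $\D_q$ for rational $q$, and part (2) reduces to part (1) via Lemma \ref{substf} applied in both $\M$ and $\M^*$ to the sentence $\ph^f$. Your explicit remark that Lemma \ref{substf} is available in $\M^*$ because every element of $U$ remains the value of a closed $\Lp$-term there is exactly the point the paper relies on.
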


\begin{proof}
(1) This follows from the Theorem, as  $\ph\in\D_q$ iff $\M,q\models\ph$.

(2) This follows from part (1) and Lemma \ref{substf} for $\M$ and $\M^*$, using the sentence $\ph^f$.
\end{proof}

From the last result and the fact that $\M,q_0,f_0\models\D^+$ we infer that $\M^*,q_0,f_0\models\D^+$, hence
$\M', q_0,f_0\models\D$, where $\M'$ is the $\c L$-reduct of $\M^*$. That concludes the proof of strong completeness of $\el_\R$ for admissible $\c L$-models over $(\R,<)$.

The model $\M^*$ need not be Kripkean. Indeed there must be an $\el_\R$-consistent formula $\ph$ that is not satisfiable in any Kripkean model over $\R$. Otherwise, $\el_\R$ would be complete for validity in such models, contrary to Scott's non-axiomatisability result. So an $\M^*$ satisfying this $\ph$, as produced by the above construction, must be non-Kripkean.

The Barcan formulas \eqref{barcan} are valid in Kripkean models, and play an essential role in the completeness proof for $\el_\Q$ of Theorem \ref{LQcomplete}, in showing that certain sets are $\forall$-complete, which enables a Kripkean model to be constructed \cite[p.121]{gabb:temp94}. But $\M^*$ validates $\el_\R$, since it is an admissible model over $(\R,<)$, and hence it validates the Barcan formulas, since they are $\el_\R$-theorems, even though $\M^*$ is not in general Kripkean.

The $\M^*$ construction only requires $\D_r$ to be maximally consistent, not $\forall$-complete as would be required to get a Kripkean model. 
While the Kripkean condition is sufficient for validity of the Barcan formulas, we see now that it is not necessary.

\section{Non-axiomatisability over $\Z$}

Scott's work on non-axiomatisability of the temporal logic of standard models is not published. A detailed treatment of the topic is given in \cite[\S2.8]{gabb:temp94}. We now analyse this further to show that the temporal logic of admissible models over $\Z$ is not recursively axiomatisable.

Let $\La=\{0,{}',+,\times\}$ be the signature for the first-order language of arithmetic, with identity. Let $\c N=(\om,0,{}',+,\times)$ be  \emph{the standard model of arithmetic}, comprising the set of natural numbers on which $0,{}',+,\times$ have their standard arithmetical meanings.

Let $\c L=\La \cup\{e,q,\prec\}$ where $e$ and $q$ are unary predicate symbols and $\prec$ is a binary predicate symbol.
We will be concerned with admissible models for this signature based on the integer time flow, i.e.\ models of the form
\begin{equation}   \label{modelM}
\M=(\Z,<,\Prop,U,\abm{\dsh}).
\end{equation}
The symbols of $\La$ are interpreted as operations on the universe $U$. We will abbreviate the interpretation $\abm{0}$ of $0$ to $0^\M$, and write the interpretations $\abm{'}$, $\abm{+}$, $\abm{\times}$  of the other function symbols just as ${}',+,\times$, allowing the context to determine what is intended.
The role of the predicate $e$ will be to provide an embedding of $U$ into $\Z$ by associating with each $a\in U$ a unique time in $\Z$ at which $a$ satisfies $e$. The symbol $\prec$ will rigidly define an ordering on $U$ that matches $<$ under this embedding.  The role of $q$ will be to single out a subset $U_q$ of $U$ that is closed under the operations interpreting $\La$ and forms an isomorphic copy of the standard model $\c N$. The  interpretation $\abm{q}:U\to\Prop$ of $q$  will also be rigid, so  $\abm{q}(a)$ is either $\Z$ or $\emp$.

Let $\mu$ be the conjunction of the following sentences:
\begin{enumerate}[(i)]
\item
$\forall x\lozenge \big(e(x)\land\b G\neg e(x)   \land \b H\neg e(x)\big)$
\item
$\Box\forall x\forall y(e(x)\land e(y)\to x\id y)$
\item
$\Box\forall x\forall y\big( x\prec y \leftrightarrow \lozenge(e(x)\land\b F e(y)\big)$
\item
$\Box\forall x(q(x)\to\Box q(x))$
\item
$\Box[ q(0)\land \forall y (y\prec 0 \to\neg q(y)) ]$
\item
$\Box\forall x[q(x) \to (x\prec x'\land q(x')\land\forall z(x\prec z\land z\prec x' \to \neg q(z)))]$
\item
$\Box \forall x\forall y[q(x)\land q(y) \to q(x+y)\land q(x\times y)]$
\item
$\Box\forall x(q(x) \to x+0\id x)$
\item
$\Box \forall x\forall y[q(x)\land q(y) \to x+y'\id(x+y)' ]$
\item
$\Box\forall x(q(x) \to x\times 0\id 0)$
\item
$\Box \forall x\forall y[q(x)\land q(y) \to x\times y'\id(x\times y)+x]$
\end{enumerate}

These correspond to the sentences (1)--(11) in \cite[p130]{gabb:temp94}, except for (ii) which replaces the stronger
$$
\Box\exists x (e(x)\land \forall y(x\not \id y \to \neg e(y)))
$$
used in that reference.
It is significant that $\mu$ as defined here contains no existential quantifiers, and its occurrences of $\forall$ allow us to apply the principle that

\begin{equation}\tag{2}
\text{
$\M,t,f\models\forall x\ph$  {implies }   $\M,t,f[a/x]\models\ph$  for all $a\in U$,}
\end{equation}
which holds in all admissible models (but its converse may not). 
For instance, if sentence (iv) holds at some time in $\M$, then the sentence 
$\forall x(q(x)\to\Box q(x))$ holds at all times, so from \eqref{allkrip}, for each $a\in U$, if  $a$ satisfies $q$ at some $t$, i.e.\ $t\in\abm{q}(a)$, then  $a$ satisfies $q$ at every time, hence $\abm{q}(a)=\Z$. Otherwise $\abm{q}(a)=\emp$.
Thus $q$ is interpreted rigidly in $\M$.
 In what follows, the use of sentences (i)--(xi) all depend in this sort of way on \eqref{allkrip} but not its converse.
 
 Let $U_q=\{a\in U: \abm{q}(a)\ne\emp\}$. We now show that the sentence $\mu$ forces $U_q$ to be a copy in $\M$ of the standard model of arithmetic.
 
\begin{theorem}  \label{thone}
Let $\M$ be an admissible $\c L$-model over integer time as in \eqref{modelM}.  If the sentence $\mu$ holds at some point of $\Z$ in $\M$, then $U_q$ is closed under the operations interpreting $0,{}',+,\times$ and forms an $\La$-model $\c U_q$ isomorphic to $\c N$.
\end{theorem}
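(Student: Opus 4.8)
The plan is to use $\mu$ to manufacture an order-embedding of $U$ into $\Z$, read off from it the arithmetical structure carried by $U_q$, and then show that the successor tower built from $0^\M$ exhausts $U_q$ --- the discreteness of $\Z$ being exactly what forces there to be no non-standard elements. Throughout, the universal conjuncts of $\mu$ are used only in the direction supplied by \eqref{allkrip}, never its converse.

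First I would fix a point $t_0$ at which $\mu$ holds and harvest the conjuncts. Each of (ii)--(xi) has the form $\Box\psi$, so by the semantics of $\Box$ its body $\psi$ holds at every integer. From (i), holding at $t_0$, the principle \eqref{allkrip} gives, for every $a\in U$, that $\lozenge(e(\bar a)\land\b G\neg e(\bar a)\land\b H\neg e(\bar a))$ holds; this pins down a time $\eta(a)\in\Z$ that is the \emph{unique} instant at which $a$ satisfies $e$. Sentence (ii) then makes $\eta$ injective (two elements satisfying $e$ simultaneously must coincide), and sentence (iii), via \eqref{allkrip} applied to both quantifiers, yields $a\prec b$ iff $\eta(a)<\eta(b)$ for all $a,b\in U$. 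Since the right-hand condition is time-independent, this both confirms that $\prec$ is rigid and exhibits $\eta$ as an order-embedding of $(U,\prec)$ into $(\Z,<)$; in particular $\prec$ is a strict linear order. Sentence (iv), as already noted, makes $q$ rigid, so $U_q=\{a:\abm{q}(a)=\Z\}$ is well defined.

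Next I would extract the arithmetic. From (v), $0^\M\in U_q$ and, via \eqref{allkrip}, no element $\prec 0^\M$ lies in $U_q$, so $0^\M$ is the $\prec$-least member of $U_q$. From (vi), for each $a\in U_q$ we get $a'\in U_q$, $a\prec a'$, and no member of $U_q$ strictly $\prec$-between $a$ and $a'$; thus $'$ sends $U_q$ into $U_q$ and yields the immediate $\prec$-successor inside $U_q$. Sentence (vii) gives closure of $U_q$ under $+$ and $\times$, so $\c U_q=(U_q,0^\M,{}',+,\times)$ is a genuine $\La$-structure, and (viii)--(xi) supply the recursion equations for $+$ and $\times$ on $U_q$. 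I then define $h:\om\to U_q$ by $h(0)=0^\M$ and $h(n+1)=h(n)'$; this is legitimate by the closure facts, and since $h(n)\prec h(n+1)$ for all $n$ the map $h$ is strictly $\prec$-increasing, hence injective.

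The crux --- and the step I expect to be the main obstacle --- is surjectivity of $h$, i.e.\ ruling out non-standard elements of $U_q$, and here the discreteness of $\Z$ is essential. The images $\eta(h(0))<\eta(h(1))<\cdots$ form a strictly increasing sequence of integers, hence are unbounded above, so for any $a\in U_q$ there is a least $N$ with $\eta(a)\le\eta(h(N))$. If $N=0$, then since $0^\M$ is $\prec$-least we also have $\eta(h(0))\le\eta(a)$, forcing $\eta(a)=\eta(h(0))$ and so $a=h(0)$ by injectivity of $\eta$. If $N>0$, then $\eta(h(N-1))<\eta(a)\le\eta(h(N))=\eta(h(N-1)')$; were $\eta(a)<\eta(h(N))$, then by (iii) we would have $h(N-1)\prec a\prec h(N-1)'$ with $a\in U_q$, contradicting (vi), so $\eta(a)=\eta(h(N))$ and $a=h(N)$. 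Thus $h$ is onto $U_q$. Finally I would check that $h$ is an $\La$-isomorphism: it preserves $0$ and $'$ by construction, preserves $+$ by induction on the second argument using (viii)--(ix), and preserves $\times$ by a further induction using (x)--(xi), the multiplicative step invoking the additive homomorphism just established. This makes $h:\c N\to\c U_q$ an isomorphism, as required; the only genuinely non-formal ingredient is the surjectivity argument, which is precisely where the integer time flow, with its finite intervals, is used.
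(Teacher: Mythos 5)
Your proposal is correct and follows essentially the same route as the paper: use (i)--(iii) with \eqref{allkrip} to build the injective order-embedding into $\Z$, use (iv)--(vii) to get the rigid $U_q$ closed under the operations with $0^\M$ least and $'$ the immediate $\prec$-successor, build the successor tower, rule out extra elements by discreteness of $\Z$, and verify the recursion equations by induction. Your surjectivity step (least $N$ with $\eta(a)\le\eta(h(N))$) is a minor rephrasing of the paper's argument that a nonstandard $b$ would force infinitely many distinct integers between $\th(a_0)$ and $\th(b)$.
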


\begin{proof}
Assume $\mu$ holds at some point in $\M$, hence each of its conjuncts does. Applying principle \eqref{allkrip} to sentence (i) we get that for each $a\in U$ there is exactly one time $t\in \Z$ at which $a$ satisfies $e$. Put $\th(a)=t$. This defines a function $\th:U\to\Z$. By sentence (ii), $\th$ is injective.

By sentence (iii), for any $a,b\in U$ and any time $t\in\Z$, we have $t\in\abm{{\prec}}(a,b)$ iff $\th(a)<\th(b)$.
So $\prec$ is interpreted rigidly in $\M$ and defines a binary relation $\pm$ on $U$ by putting $a\pm b$ iff $\abm{{\prec}}(a,b)\ne\emp$ iff $\abm{{\prec}}(a,b)=\Z$, 
making $a\prec^\M b$ iff $\th(a)<\th(b)$. It follows that $\pm$ inherits many properties of the ordering $<$, including transitivity, irreflexivity and linearity.

As noted above,  (iv) ensures that $q$ is interpreted rigidly, so $U_q=\{a\in U: \abm{q}(a)=\Z\}$.
 Sentence (v) ensures that $0^\M$ belongs to $U_q$, while sentences (vi) and (vii) yield that $U_q$ is closed under the operations   ${}',+,\times$, so forms an $\La$-structure $\c U_q$.

A sequence $\{a_n:n<\om\}$ of distinct elements of $U_q$ can be defined inductively by putting $a_0=0^\M$ and $a_{n+1}=a_n{}'$. By (vi), $a_n\pm a_{n+1}$, so we have a strictly increasing sequence
$$
a_0\pm a_1\pm\cdots \cdots a_n\pm  a_{n+1}\pm \cdots\cdots\cdots
$$
Now we show that $U_q=\{a_n:n<\om\}$. First,
(v) ensures that $a_0$ is the $\pm$-least member of $U_q$, and (vi) ensures that there is no member of $U_q$ that is $\pm$-between $a_n$ and $a_{n+1}$ for any $n<\om$. Therefore if there exists some $b\in U_q$ with $b\ne a_n$ for all $n<\om$, then we must have $a_n\pm b$ for all $n$. Applying the injective order-preserving $\th$ then gives  
$\th(a_0)< \th(a_{n})<\th(b)$ for all $n>0$. But this is impossible, as there are infinitely many $\th(a_n)$'s, but only finitely many integers between $\th(a_0)$ and $\th(b)$.
 So we  conclude that $b$ cannot exist, and therefore 
$U_q=\{a_n:n<\om\}$.

The sentences (viii)--(xi) can be used to show, by induction on $n$,  that in general $a_{m+n}=a_m+a_n$ and 
$a_{m\times n}=a_m\times a_n$. Thus the map $n\mapsto a_n$ is an isomorphism from $\c N$ onto $\c U_q=(U_q,0^\M,{}',+,\times)$.
\end{proof}

To discuss the language of arithmetic within $\M$ we relativise the quantifiers of $\La$-formulas to range over $U_q$. A translation is inductively defined, taking each $\La$-formula $\ph$ to an $\La\cup\{q\}$-formula $\ph_q$, by putting 
$\ph_q=\ph$ if $\ph$ is atomic; letting the map $\ph\mapsto\ph_q$ commute with the Boolean connectives, i.e.\ 
$(\neg\ph)_q=\neg(\ph_q)$,  $(\ph\land\psi)_q=\ph_q\land\psi_q$ etc.; and  $(\forall x\ph)_q=\forall x(q(x)\to\ph_q)$.

\begin{lemma}    \label{lemtwo}
Let $\M=(T,<,\Prop,U,\abm{\dsh})$ be any admissible $\c L$--model in which $q$ is interpreted rigidly and  $U_q$ is a subalgebra of $(U,0^\M,{}',+,\times)$. Let $\ph$ be any $\La$-formula. Then for any variable assignment $f:\Var\to U_q$ and any $t\in T$,  we have $\c U_q,f\models\ph$ iff $\M,t,f\models \ph_q$.
\end{lemma}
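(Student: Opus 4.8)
The plan is to induct on the construction of the $\La$-formula $\ph$, proving the stated biconditional simultaneously for all $t\in T$ and all assignments $f:\Var\to U_q$. Since the left side $\c U_q,f\models\ph$ does not mention $t$, the induction will in fact establish the sharper fact that each truth set $\abm{\ph_q}f$ equals either $T$ or $\emp$; this ``rigidity'' of relativised arithmetical truth is what lets the admissible semantics behave classically here. For the base case, recall that $\La$ has no relation symbols besides $\id$, so the only atomic $\La$-formulas are identities $\tau_1\id\tau_2$, for which $(\tau_1\id\tau_2)_q=\tau_1\id\tau_2$. Since $f$ takes values in $U_q$ and $U_q$ is a subalgebra of $(U,0^\M,{}',+,\times)$, each term value $\abm{\tau_i}f$ lies in $U_q$ and agrees with its value computed in $\c U_q$. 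Hence $\c U_q,f\models\tau_1\id\tau_2$ iff $\abm{\tau_1}f=\abm{\tau_2}f$, which by the time-independent (rigid) semantics of $\id$ holds iff $\M,t,f\models\tau_1\id\tau_2$ for every $t$. The Boolean cases are routine, as $\ph\mapsto\ph_q$ commutes with $\neg$ and $\land$ and these connectives receive their classical pointwise interpretation at each time in any premodel, so the induction hypothesis transfers directly.

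The quantifier case is the crux. Write $\ph=\forall x\psi$, so that $\ph_q=\forall x(q(x)\to\psi_q)$. For the implication from $\M$ to $\c U_q$, suppose $\M,t,f\models\forall x(q(x)\to\psi_q)$. By \eqref{allkrip}, which holds in every admissible model, $\M,t,f[a/x]\models q(x)\to\psi_q$ for every $a\in U$. When $a\in U_q$, rigidity of $q$ gives $\abm{q}(a)=T$, so $\M,t,f[a/x]\models q(x)$ and therefore $\M,t,f[a/x]\models\psi_q$; the induction hypothesis then yields $\c U_q,f[a/x]\models\psi$. As this holds for all $a\in U_q$, we get $\c U_q,f\models\forall x\psi$. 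For the converse, suppose $\c U_q,f\models\forall x\psi$; the key observation is that every instance truth set $\abm{q(x)\to\psi_q}f[a/x]$ equals $T$. Indeed, if $a\in U_q$ then $\abm{q(x)}f[a/x]=T$ by rigidity and $\abm{\psi_q}f[a/x]=T$ by the induction hypothesis applied to $\c U_q,f[a/x]\models\psi$, whereas if $a\notin U_q$ then $\abm{q(x)}f[a/x]=\emp$, so in either case the truth set of the implication is $T$. Consequently $\bcap_{a\in U}\abm{q(x)\to\psi_q}f[a/x]=T$, and since $T\in\Prop$ we may take $X=T$ in the admissible clause for $\forall$ (page~\pageref{pagesemantsforall}) to conclude $\M,t,f\models\forall x(q(x)\to\psi_q)$ for every $t$.

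I expect the converse direction of the quantifier case to be the main obstacle, since in an admissible model one cannot in general infer $\M,t,f\models\forall x\chi$ from the mere truth of all its instances $\M,t,f[a/x]\models\chi$: the admissible greatest lower bound may fail to contain $t$. The device that defuses this is precisely the relativisation together with the rigidity of $q$; together they force each relativised instance $q(x)\to\psi_q$ to be true throughout $T$ rather than merely at $t$, so the greatest lower bound collapses to the admissible set $T$ and the admissibility requirement is met trivially. This is why the hypotheses that $q$ is interpreted rigidly and that $U_q$ is a subalgebra are exactly what the argument needs.
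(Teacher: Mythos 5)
Your proof is correct and follows essentially the same route as the paper's: identities handled via rigidity of $\id$ and the subalgebra hypothesis, Boolean cases pointwise, the $\M$-to-$\c U_q$ direction via \eqref{allkrip} plus rigidity of $q$, and the $\c U_q$-to-$\M$ direction by showing each instance $q(x)\to\psi_q$ has truth set $T$ so that $T\in\Prop$ itself witnesses the admissible clause for $\forall$. Your explicit remark that the relativised formulas are rigid (truth set $T$ or $\emp$) is also made in the paper, so there is nothing to add.
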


\begin{proof}
Since $U_q$ is closed under $0,{}',+,\times$, it forms an $\La$-structure $\c U_q$ in which any interpretation $\ab{\tau}^{\c U_q}f$ of any $\La$-term is identical to $\abm{\tau}f$. The notation  $\c U_q,f\models\ph$ expresses the classical (non-modal) satisfaction relation in this structure. In particular  
\begin{equation} \label{forallUq}
\text{$\c U_q,f\models\forall x\ph$ iff  every $a\in U_q$ has   $\c U_q,f[a/x]\models\ph$ .}
\end{equation}
The statement of the lemma implies that the formulas  $\ph_q$ are interpreted rigidly in $\M$: if $\M,t,f\models \ph_q$ holds for some $t$ then it holds for all.

We prove the lemma by induction on the formation of $\La$-formulas, which are constructed from atomic formulas by Boolean connectives and $\forall$. If $\ph$ is atomic, then it is an equation $\tau_1\id\tau_2$, and is equal to $\ph_q$. We have
$\c U_q,f\models\tau_1\id\tau_2$  iff  $\ab{\tau_1}^{\c U_q}f=\ab{\tau_2}^{\c U_q}f$ iff  $\ab{\tau_1}^{\M}f=\ab{\tau_2}^{\M}f$, which is precisely the condition for  $\M,t,f\models\tau_1\id\tau_2 $ to hold for any $t\in T$. Hence the lemma holds for atomic formulas.

The inductive cases of the Boolean connectives are straightforward. For the case of $\forall$, assume inductively that the result holds for $\ph$. Suppose $\c U_q,f\models\forall x\ph$.  For any $t\in T$ and $a\in U$, if
$\M,t,f[a/x]\models q(x)$, then $t\in\abm{q}(a)$ so $a\in U_q$, hence  $\c U_q,f[a/x]\models\ph$  by \eqref{forallUq}, therefore $\M,t,f[a/x]\models \ph_q$ by the induction hypothesis on $\ph$. 

This shows that  $\M,t,f[a/x]\models q(x)\to\ph_q$ for every $t\in T$ and $a\in U$. Hence
\begin{equation}
\bigcap\nolimits_{a\in U}\abm{ q(x)\to\ph_q}f[a/x]=T.
\end{equation}
But $T\in\Prop$, so any $t\in T$ belongs to a member of $\Prop$ that is included in the left side of this last equation. By the admissible semantics of $\forall$, this means that $\M,t,f\models\forall x(q(x)\to\ph_q)$, i.e.\ 
$\M,t,f\models (\forall x \ph)_q$.

Conversely, suppose  $\M,t,f\models (\forall x \ph)_q$. Then each $a\in U_q$ has $\M,t,f[a/x]\models  q(x)\to\ph_q$ (see \eqref{allkrip}), and $\M,t,f[a/x]\models  q(x)$ as $\abm{q}(a)=\Z$, so then $\M,t,f[a/x]\models \ph_q$. Hence  $\c U_q,f[a/x]\models\ph$ by the induction hypothesis on $\ph$. This shows $\c U_q,f\models\forall x\ph$ by  \eqref{forallUq}.

Altogether now we have shown that the lemma holds for $\forall x\ph$, completing the inductive case of $\forall$, and hence the inductive proof for all formulas.
\end{proof}

\begin{theorem}  \label{trueinN}
An $\La$-sentence $\ph$ is true in $\c N$ iff the $\c L$-sentence $\mu\to\ph_q$ is valid in all admissible $\c L$-models over $(\Z,<)$.
\end{theorem}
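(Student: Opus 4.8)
The plan is to prove the biconditional by treating its two directions separately, using Theorem~\ref{thone} and Lemma~\ref{lemtwo} as the bridges between the arithmetic of $\c N$ and the modal semantics. Since $\mu$ and the translation $\ph_q$ are both sentences, I can suppress the variable assignment throughout and reason about truth at a point. The forward direction falls out almost immediately from the machinery already in place, whereas the backward direction rests on constructing one concrete admissible model over $(\Z,<)$ in which $\mu$ holds and $U_q$ is a faithful copy of $\c N$; that construction is where the substance lies.

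For the forward implication, suppose $\c N\models\ph$ and let $\M$ be any admissible $\c L$-model over $(\Z,<)$ and $t\in\Z$ any point. If $\M,t\not\models\mu$ then $\mu\to\ph_q$ holds at $t$ vacuously. If $\M,t\models\mu$, then Theorem~\ref{thone} supplies an isomorphism $\c U_q\cong\c N$, whence $\c U_q\models\ph$; moreover conjunct (iv) makes $q$ rigid and (v)--(vii) make $U_q$ a subalgebra, so the hypotheses of Lemma~\ref{lemtwo} are met. Applying that lemma to the sentence $\ph$ with any assignment into the nonempty set $U_q$ (nonempty since $0^\M\in U_q$ by (v)) gives $\M,t\models\ph_q$. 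Either way $\M,t\models\mu\to\ph_q$, so $\mu\to\ph_q$ is valid.

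For the backward implication, assume $\mu\to\ph_q$ is valid in all admissible models over $(\Z,<)$ and exhibit a witnessing standard model $\M$, a standard model being the admissible case $\Prop=\wp\Z$, so the hypothesis applies to it. I would take $U=\Z$ with $0^\M,{}',+,\times$ the usual integer operations, interpret $e$ diagonally by $\abm{e}(a)=\{a\}$, interpret $\prec$ rigidly as integer order (so $\abm{\prec}(a,b)=\Z$ iff $a<b$ and $\emp$ otherwise), and interpret $q$ rigidly with $\abm{q}(a)=\Z$ when $a\ge 0$ and $\emp$ otherwise, so that $U_q=\om$ carries exactly the structure $\c N$. The heart of the argument is checking that each conjunct (i)--(xi) of $\mu$ holds at every integer: (i) because $a$ satisfies $e$ at the single time $a$; (ii) and (iii) because that time is $a$ and $\prec$ tracks $<$; (iv)--(vii) because $q$ isolates $\om$, which has $\pm$-least element $0^\M$, immediate successors, and closure under the operations; and (viii)--(xi) because these are the Peano recursion equations, valid in $\Z$. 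Since then $\M,t\models\mu$ at every $t$, validity of $\mu\to\ph_q$ forces $\M,t\models\ph_q$, and Lemma~\ref{lemtwo} transfers this to $\c U_q\models\ph$, so $\c N\models\ph$ via the isomorphism.

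The main obstacle is precisely this model-construction step: verifying that the diagonal interpretation of $e$ together with the rigid, order-tracking interpretations of $\prec$ and $q$ really satisfy the intertwined conjuncts (i)--(vi), which pin down the embedding $\th$, the ordering $\pm$, and the successor and minimality clauses. Once $\mu$ is confirmed in $\M$, the remaining inferences are formal applications of the two cited results; in particular nothing in the plan appeals to the converse of \eqref{allkrip}, so the reasoning stays within the admissible, and not merely Kripkean, setting.
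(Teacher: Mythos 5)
Your proposal is correct and follows essentially the same route as the paper: the forward direction via Theorem~\ref{thone} and Lemma~\ref{lemtwo}, and the backward direction via the same standard model over $(\Z,<)$ with $\Prop=\wp\Z$, $U=\Z$, the diagonal interpretation $\abm{e}(a)=\{a\}$, and rigid $\prec$ and $q$ carving out $U_q=\om$. Your explicit check of the conjuncts of $\mu$ and the remark that $U_q$ is nonempty (so Lemma~\ref{lemtwo} applies) only make explicit what the paper leaves to the reader.
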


\begin{proof}
Suppose $\c N\models\ph$. If $\M$ is admissible over $(\Z,<)$ and $\M,t\models \mu$, then by Theorem \ref{thone} the $\La$-structure $\c U_q$ within $\M$ is isomorphic to $\c N$, hence $\c U_q\models \ph$, so $\M,t\models \ph_q$ by Lemma \ref{lemtwo}.  This shows  $\M,t\models \mu\to\ph_q$ for any $t\in\Z$, i.e. $\mu\to\ph_q$ is valid in $\M$.

Conversely let $\mu\to\ph_q$ be valid in all admissible models over the time flow $(\Z,<)$. Define such a model
$\M$ by putting $\Prop=\wp\Z$, the full powerset of $\Z$, and $U=\Z$, with the symbols of $\La$ having their standard interpretations in $\Z$. Interpret $e$ in $\M$ by defining $\abm{e}(a)=\{a\}\in\Prop$ for all $a\in U$. For $q$  define $\abm{q}(a)$ to be $\Z$ if $a\in\om$, and $\emp$ otherwise. Then $q$ is interpreted rigidly in $\M$, and
$U_q=\om$, so $U_q$ is closed under the $\La$-operations.
For $\prec$  define $\abm{{\prec}}(a,b)$ to be $\Z$ if $a<b$, and $\emp$ otherwise. Then the relation $\pm$ on $U (=\Z)$ is  $<$.  

Taking any $t\in\Z$, we have that $\M,t\models\mu$, with 
  $\th:U\to\Z$ being the identity function on $\Z$. 
 By the assumed validity of $\mu\to\ph_q$ we get  $\M,t\models\ph_q$, so by Lemma \ref{lemtwo},
 $\c U_q\models\ph$. But the $\La$-structure $\c U_q$ in this case is  just $\c N$,
 so $\c N\models\ph$ as required.
\end{proof}

\begin{corollary}
 The set of formulas valid in all admissible $\c L$-models over integer time is not recursively axiomatisable. 
\end{corollary}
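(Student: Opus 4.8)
The plan is to turn Theorem~\ref{trueinN} into a computability-theoretic reduction of truth in $\c N$ to validity over $(\Z,<)$. Write $\mathrm{Val}_\Z$ for the set of $\c L$-formulas valid in all admissible $\c L$-models over $(\Z,<)$, and write $\mathrm{Th}(\c N)$ for the set of $\La$-sentences true in the standard model $\c N$. The first observation is that any recursively axiomatisable set of formulas is recursively enumerable: since both the set of axioms and the set of proofs are recursive, one can effectively enumerate all proofs and output their conclusions, thereby listing exactly the theorems. Hence it suffices to prove that $\mathrm{Val}_\Z$ is not recursively enumerable.

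Next I would note that the syntactic map $\ph\mapsto\mu\to\ph_q$ is plainly computable: the relativisation $\ph\mapsto\ph_q$ is defined by structural recursion on $\La$-formulas (it fixes atomic formulas, commutes with the Boolean connectives, and sends $\forall x\psi$ to $\forall x(q(x)\to\psi_q)$), and prefixing the fixed sentence $\mu$ with an implication is a trivial effective operation. When $\ph$ is an $\La$-sentence, $\mu\to\ph_q$ is an $\c L$-sentence, and by Theorem~\ref{trueinN},
\[
\c N\models\ph \iiff (\mu\to\ph_q)\in\mathrm{Val}_\Z .
\]
Thus $\ph\mapsto\mu\to\ph_q$ is a total recursive many-one reduction of $\mathrm{Th}(\c N)$ to $\mathrm{Val}_\Z$. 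Consequently, if $\mathrm{Val}_\Z$ were recursively enumerable, then $\mathrm{Th}(\c N)=\{\ph:(\mu\to\ph_q)\in\mathrm{Val}_\Z\}$, being the preimage of a recursively enumerable set under a total computable function, would be recursively enumerable as well.

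Finally I would invoke the fact that truth in $\c N$ is not recursively enumerable; within the present setting one can argue directly. Since $\c N$ is a model, $\mathrm{Th}(\c N)$ is negation complete: for each $\La$-sentence $\ph$ exactly one of $\ph,\neg\ph$ lies in $\mathrm{Th}(\c N)$. Hence the complement of $\mathrm{Th}(\c N)$ is its preimage under the computable map $\ph\mapsto\neg\ph$, so it too would be recursively enumerable if $\mathrm{Th}(\c N)$ were. A set that is both recursively enumerable and co-recursively-enumerable is recursive, so $\mathrm{Th}(\c N)$ would be decidable, contradicting the undecidability of first-order arithmetic (Church's theorem, equivalently Tarski's undefinability of truth). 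This contradiction shows $\mathrm{Val}_\Z$ is not recursively enumerable, and therefore not recursively axiomatisable. All the genuine work lies in Theorem~\ref{trueinN}; the only points needing care are that the reduction is genuinely effective, which is immediate from the recursive definition of $\ph_q$, and the standard fact that $\mathrm{Th}(\c N)$ is not recursively enumerable, so I do not expect any substantial further obstacle.
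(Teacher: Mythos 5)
Your proposal is correct and follows essentially the same route as the paper: recursive axiomatisability implies recursive enumerability of the theorems, and the computable map $\ph\mapsto\mu\to\ph_q$ together with Theorem \ref{trueinN} would then yield a recursive enumeration of the true sentences of arithmetic, which is impossible. The only difference is that you spell out why $\mathrm{Th}(\c N)$ is not recursively enumerable (via negation completeness and Church's theorem), a standard fact the paper simply cites as known.
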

\begin{proof}
This is a  well known argument. Any recursively axiomatisable logic has a recursively enumerable set of theorems \cite[Theorem 7.1]{ ende:elem77}. But from a recursive enumeration of the formulas valid in all admissible $\c L$-models over $\Z$ we could obtain via Theorem \ref{trueinN} a recursive enumeration of the true sentences of the standard model of arithmetic, something that does not exist.
\end{proof}

In Theorem \ref{thone} the negative members of $\Z$ do not play a particular role and can be dispensed with. The theorem remains true if $\Z$ is replaced by $\om$ as the time flow, leading to a proof that the logic of formulas valid in admissible models over natural number time is not recursively axiomatisable. 

 This non-axiomatisability of logics over $\Z$ or $\om$ is covered by the more general fact that it holds for the temporal logic of admissible models over any \emph{scattered} linear order, which is one that does not contain any dense suborder, i.e.\ one into which $\Q$ cannot be order-embedded.   Hausdorff showed that if $\c O$ is the class of all well-orderings and their inverses, then the scattered linear orders form the smallest class of linear orderings that includes $\c O$ and is closed under lexicographical sums indexed by a member of $\c O$ \cite[\S5.3]{rose:line82}.
  
 If the $\c L$-sentence $\mu$ is modified by adding the sentences (12)--(15) of \cite[p131]{gabb:temp94} as conjuncts, then by the analysis given in that reference, if the sequence $\{a_n:n<\om\}$ in the proof of our Theorem  \ref{thone} did not exhaust $U_q$, then there would exist an order-embedding $\eta:(\Q\,<)\to (U_q,\prec^\M)$. The modified $\mu$ has a single subformula of the form $\exists z\psi$, but the formula $\psi$ there is rigid, so the existential quantifier gets the Kripkean interpretation \eqref{kripsemExists}. Hence the analysis of  \cite{gabb:temp94} for a standard model holds also for an admissible one. Then composing $\eta$ with the map $\theta$ from the proof of Theorem \ref{thone} would give an order-embedding of $\Q$ into the time flow of $\M$, showing that the latter was not scattered. Accordingly, if the time flow \emph{is} scattered, then  $U_q=\{a_n:n<\om\}$.
 This leads to a proof that the logic of formulas valid in admissible models over a scattered time flow is not recursively axiomatisable. 
 
 As a final topic, noting that our results imply that Theorem \ref{thone} must fail in general for models over the real time flow,
 we  construct an $\c L$-model $\M=(\R,<,\Prop,U,\abm{\dsh})$ that exhibits this failure. $\M$  satisfies the sentence $\mu$ but does not have $\c U_q$ isomorphic to the standard model of arithmetic $\c N$.

Let $\Prop=\wp\R$. 
Take $\c U=(U,0,{}',+,\times)$ to be a countable nonstandard model of arithmetic, say a proper elementary $\La$-extension of $\c N$. Interpret $q$ to hold rigidly in $\M$ of every member of this structure i.e.\ $\abm{q}(a)=\R$ for all $a\in U$. Then $\c U_q$ is just $\c U$, which is not isomorphic to $\c N$.  Interpret $\prec$ rigidly to be the ordering relation of $\c U$.

To interpret $e$, suppose temporarily that we have an injective order-preserving function $\th$ from $U$ to $\R$. Then we use it to define $\abm{e}(a)=\{\th(a)\}$ for all $a\in U$, completing the definition of $\M$. It can then be seen that $\M,t\models\mu$ for any $t\in\R$.

So it remains to show that a $\th$ as described does exist. We use the well-known description of  non-standard models of arithmetic \cite[Section 3.1.]{robi:stan66}: $\c U$ comprises a standard part, a copy of $\c N$, followed linearly by countably many pairwise disjoint copies of $(\Z,{}')$, called \emph{galaxies}. A typical galaxy looks like
$$
\cdots\cdots\prec^\M\bullet \prec^\M a\prec^\M a'\prec^\M a''\prec^\M\cdots\cdots
$$
The set of galaxies is linearly ordered by declaring one galaxy to be less than another if every member of the first is $\prec^\M$-less than every member of the second in $\c U$. There is no least galaxy in this ordering and no greatest, while between any two galaxies there lies a third.
So the set of galaxies forms  a countable dense linear ordering without endpoints, and hence is isomorphic to the ordered set $(\Q,<)$ of rationals, by a celebrated theorem of Cantor \cite[Theorem 2.8]{rose:line82}.

Any galaxy $\varGamma$ can be embedded into any open interval $I$ of the real line. Let  $\varGamma=\{a_j:j\in\Z\}$ with $a_j\prec^\M a_k$ iff $j<k$. Use the density of the reals to find a subset $\{t_j:j\in\Z\}$ of $I$ with $t_j < t_k$ iff $j<k$. Then the map $a_j\mapsto t_j$ is an order preserving embedding of $(\varGamma,\prec^\M)$ into $(I,<)$.

To define $\th$ we need a countable collection $\c I$ of pairwise disjoint open intervals of $\R$  that we can match bijectively  with the galaxies, and with $\c I$ having the same order type as $(\Q,<)$ under the linear ordering $\lessdot$ that puts $(s,t)\lessdot(s',t')$ iff $(s,t)$ is strictly to the left of  $(s',t')$, i.e.\ iff $t<s'$.
We can find such a $\c I$ within any given open interval $(r,s)$ on the real line.
As a first step choose some proper subinterval $(r_1,s_1)$ of $(r,s)$ and put it into $\c I$.
$$
r  \text{ --------------------------- } r_1 \cdots\cdots s_1 \text{ --------------------------- }   s
$$
This leaves two  `open pieces' $(r,r_1)$ and $(s_1,s)$ within $(r,s)$. At step two choose proper subintervals $(r_{21},s_{21})$  and
 $(r_{22},s_{22})$ of $(r,r_1)$ and $(s_1,s)$, respectively, and put them into $\c I$. 
$$
r  \text{ ------ } r_{21} \cdots s_{21}     \text{ ------ }      r_1 \cdots\cdots s_1 \text{ ------ } r_{22}\cdots s_{22} \text{ ------ } s.
$$
That leaves four open pieces within $(r,s)$ to have proper subintervals chosen. Iterating these steps  countably many times gives the desired $\c I$ whose  ordering $\lessdot$ is dense and without end-points.

For the definition of $\th$, start with the standard part of $\c U$, which can be identified as an ordered set with $(\om,<)$, and put $\th(n)=\frac{n}{n+1}$ for all $n\in\om$ to give an order-preserving injection of $(\om,<)$ into the interval $[0,1)$ of $\R$.
Then take any open interval $(r,s)$ with $1\leq r$ and let $(\c I,\lessdot)$ be the ordered set of open subintervals of $(r,s)$ constructed above. There is an order-isomorphism $\eta$ from the set of galaxies onto $(\c I,\lessdot)$, because both structures have the same order type as $(\Q,<)$. Extend $\th$ to act on each galaxy $\varGamma$ as an order preserving embedding of $(\varGamma,\prec^\M)$ into $(\eta(\varGamma),<)$, as described above. That completes the construction of $\th$ as an order-preserving injection of $(U,\prec^\M)$ into the real line, and hence completes the counter-example to Theorem \ref{thone} over $\R$.

\subsection*{Acknowledgment} 
 
 I thank Ian Hodkinson for helpful discussions and suggestions that improved the presentation, including extending the non-axiomatisability result for integer time to scattered orders.

\bibliographystyle{plain}

 \end{document}